\newcommand{\E}{\mathbb{E}}
\newcommand{\N}{\mathbb{N}}
\renewcommand{\P}{\mathbb{P}}
\newcommand{\1}{\mathbbm{1}}
\newcommand{\Cs}{\mathcal{C}}
\newcommand{\F}{\mathcal{F}}
\renewcommand{\L}{\mathcal{L}}
\newcommand{\U}{\mathcal{U}}
\newcommand{\ad}{\text{ and }}
\newcommand{\fa}{\forall}
\newcommand{\ceq}{\coloneqq} 
\newcommand{\e}{\varepsilon}
\newcommand{\es}{\emptyset}
\DeclarePairedDelimiter\normd{\lVert}{\rVert} 
\newcommand{\bpar}[1]{\left(#1\right)}
\newcommand{\bbr}[1]{\left[#1\right]}
\newcommand{\bcurly}[1]{\left\{#1\right\}}
\newcommand{\no}{\noindent}
\DeclareMathOperator*{\argmin}{arg\,min}
\newcommand{\ind}[1]{\1_{[#1]}} 
\renewcommand{\mathbb}{\mathbf}
\renewcommand{\mathbbm}{\mathbf}
\newcommand{\cT}{\mathcal{T}}
\newcommand{\MLE}{\mathrm{MLE}}
\newcommand{\Geo}{\mathrm{Geo}}
\newcommand{\Ber}{\mathrm{Ber}}
\newcommand{\Exp}{\mathrm{Exp}}
\newcommand{\Unif}{\mathrm{Unif}}
\newcommand{\T}{\mathbf{T}}
\newcommand{\dist}{\mathrm{dist}}
\renewcommand{\es}{\varnothing}
\def\draft{1}
\newcommand{\mnote}[1]{\ifnum\draft=1\textcolor{red}{[\textbf{Madhu:} #1]}\fi}
\newcommand{\cnote}[1]{\ifnum\draft=1\textcolor{violet}{[\textbf{Cassandra:} #1]}\fi}
\newcommand{\anote}[1]{\ifnum\draft=1\textcolor{teal}{[\textbf{Anna:} #1]}\fi}
\newcommand{\enote}[1]{\ifnum\draft=1\textcolor{blue}{[\textbf{Elchanan:} #1]}\fi}
\title{Finding the root in random nearest neighbor trees}
\author[A.~Brandenberger, C.~Marcussen, E.~Mossel, M.~Sudan]{\small Anna Brandenberger$^\circ$, Cassandra Marcussen$^\dagger$, Elchanan Mossel$^\circ$, Madhu Sudan$^\dagger$}
\address{$^\circ$Department of Mathematics, MIT \\ $^\dagger$School of Engineering and Applied Sciences, Harvard}
\email{abrande@mit.edu, cmarcussen@g.harvard.edu, elmos@mit.edu, madhu@cs.harvard.edu}
\begin{document}

\begin{abstract}
    We study the inference of network archaeology in growing random geometric graphs. We consider the root finding problem for a random nearest neighbor tree in dimension $d \in \mathbb{N}$, generated by sequentially embedding vertices uniformly at random in the $d$-dimensional torus and connecting each new vertex to the nearest existing vertex. More precisely, given an error parameter $\varepsilon > 0$ and the unlabeled tree, we want to efficiently find a small set of candidate vertices, such that the root is included in this set with probability at least $1 - \varepsilon$. We call such a candidate set a $\textit{confidence set}$. We define several variations of the root finding problem in geometric settings -- embedded, metric, and graph root finding -- which differ based on the nature of the type of metric information provided in addition to the graph structure (torus embedding, edge lengths, or no additional information, respectively).

    We show that there exist efficient root finding algorithms for embedded and metric root finding. For embedded root finding, we derive upper and lower bounds (uniformly bounded in $n$) on the size of the confidence set: the upper bound is subpolynomial in $1/\varepsilon$ and stems from an explicit efficient algorithm, and the information-theoretic lower bound is polylogarithmic in $1/\varepsilon$. In particular, in $d=1$, we obtain matching upper and lower bounds for a confidence set of size $\Theta\left(\frac{\log(1/\varepsilon)}{\log \log(1/\varepsilon)} \right)$.
\end{abstract}

\maketitle

\section{Introduction}
Given the structure of a network in which vertices join sequentially over time, one may be interested in how to use the network's structure to determine properties of the history of the process. For example, is it possible for the network's structure to reveal which vertex was added first, which vertices were added early, or which vertices were added late? The line of research concerned with the relationship between the network's structure and reconstructing its history is referred to as \textit{network archaeology} \cite{navlakha2011network}. Prior work has focused on network archaeology for combinatorial models of randomly growing graphs such as the uniform and preferential attachment models \cite{bubeck2017finding, contat2024eve}.

We initiate the study of network archaeology in geometric settings, studying the problem of finding the root of a tree that evolved according to the random nearest neighbor model. First, we formalize several variations of the root finding problem in geometric settings, with the variations differing based on the nature of the information available to the algorithm. We then focus on algorithms that have access to both the graph-theoretic and geometric structure of the network, with the goal of understanding how to best utilize both the combinatorial and geometric properties of the network's structure to find the root vertex.

The random nearest neighbor model -- sometimes referred to as the online nearest neighbor model -- is one of the simplest geometric growth models for trees that can be studied. This model, first studied by Steele \cite{steele1989cost}, considers a process in which vertices are sequentially placed independently and uniformly in an underlying metric space and connected to the geometrically closest vertex added previously. While the model can be defined on a general metric space, for concreteness we focus on the underlying metric space being the $d$-dimensional torus of unit volume.

\subsection*{Random nearest neighbor model}
Fix $d, n \in \N$. The random nearest neighbor process on the $d$-dimensional torus $\T^d$ of unit volume is defined as follows. For each $i \in [n]$, a vertex is embedded uniformly on $\T^d$ independently of previous vertices; we refer to vertices by their index. For $i \geq 2$, we connect the new vertex $i$ to the closest vertex among $[i-1]$ in Euclidean torus distance, breaking ties arbitrarily (no ties occur almost surely). The Euclidean torus distance between $x = (x_1, x_2, \dots, x_d) \in \T^d$ and $y = (y_1, y_2, \dots, y_d) \in \T^d$ is defined as
\begin{equation}\label{equation:torus-euclidean-distance}
    d(x, y) = \Big(\sum_{i = 1}^d \left(\min\{ |x_i - y_i|,  1 - |x_i - y_i|\} \right)^2 \Big)^{1/2}.
\end{equation}
A tree $T_n$ with $n$ vertices generated according to this process is called a random nearest neighbor tree.
We denote the random nearest neighbor process and tree as \textit{$d$-NN process} and \textit{$d$-NN tree} respectively. 

\subsection*{Network archaeology and root finding} Initiated by the seminal paper \cite{bubeck2017finding}, the root finding question is as follows: given the unlabeled tree at some large time $n$, how can we recover the initial vertex 1? Clearly, since vertices 1 and 2 are exchangeable, this cannot be done with probability greater than $1/2$. Instead, the question is then: how large of a set must be returned to ensure that the root vertex (1) is included in this set with high probability? This question was first studied by \cite{bubeck2017finding} for the uniform and preferential attachment tree models, and has since been the subject of extensive study \cite{ crane2021inference, briend2023archaeology, contat2024eve, banerjee2022root, banerjee2023degree, brandenberger2022root}.

More precisely, in \textit{non-geometric settings}, a root finding algorithm is given as input a target error parameter $\e \in (0,1)$ and a randomly grown tree of size $n$ with its labels removed, and returns a subset of vertices $H(\e,n)$ containing the root with probability $1-\e$. 

There are three natural ways to extend this question to recursive trees generated from a geometric process, which differ based on the amount of geometric information given to the algorithm. We call these \textit{embedded} root finding, \textit{metric} root finding and \textit{graph} root finding. 

\begin{defn} \label{defn:root-finding} A root finding algorithm for a random tree $T_n$ generated via a geometric process on $\T^d$ is given inputs $\e \in (0,1)$, $d \in \N$, and one of the following:
\begin{itemize}
    \item (Embedded root finding) the unlabeled tree $T_n^\circ$ embedded in $\T^d$;
    \item (Metric root finding) the adjacency matrix of the unlabeled tree $T_n^\circ$, with corresponding edge lengths;
    \item (Graph root finding) only the adjacency matrix of $T_n^\circ$.
\end{itemize}
It outputs a subset of vertices $H_{\T^d}(\e, T_n^{\circ})$ of size independent of $n$ (only depends on $\e$), which contains the root with probability $1 - \e$, i.e., 
\begin{equation}\label{eq:root-finding-condition}
  \liminf_{n \to \infty} \P_{T_n}\left\{ 1 \in H_{\T^d}(\e, T_n^{\circ})\right\} \geq 1 - \e.
\end{equation}
We refer to $H_{\T^d}(\e, T_n^{\circ})$ as $H(\e, n)$ throughout the paper when the underlying metric space $\T^d$ and input unlabeled tree $T_n^{\circ}$ on $n$ vertices are clear from the context.
\end{defn}
The first question to ask is then: \textit{does such a root finding algorithm exist}? For some models of random trees, it is impossible to output a confidence set $H(\e,n)$ whose size is independent of $n$ \cite{brandenberger2022root}. Second, if such a root finding algorithm exists, from an information-theoretic standpoint we also want to ask: \textit{what confidence set size is necessary}? We want to find both upper and lower bounds on the confidence set size $|H(\e,n)|$ returned by the algorithm.

In this paper, we focus on embedded and metric root finding. We show that there \textit{do} exist root finding algorithms whose confidence set size $|H(\e, n)|$ only depends on $\e$ and is uniformly bounded with respect to $n$, whose runtime is polynomial in $n$ and $1/\e$. In particular, a very simple metric root finding algorithm returns a confidence set of size $2^d/\e$; see Section~\ref{section:simple-algorithm}. Focusing on embedded root finding, we derive sharper results: both upper and lower bounds on the confidence set size. Our positive results are given by efficient algorithms, and our lower bounds on the confidence set size are information-theoretic.

For $d = 1$, we prove matching upper and lower bounds on the confidence set size and provide an efficient algorithm achieving this bound.

\begin{thm}\label{thm:1d-upper-and-lower}
There exist $c_1, c_2 > 0$ such that the following holds for the 1-NN model for all $n \in \mathbb{N}$ and all sufficiently small $\e > 0$. 
There exists a $O(n^2 + \log^2(1/\e))$-time embedded root finding algorithm that returns $H(\e, n)$ of size satisfying $|H(\e, n)|\log |H(\e, n)| \leq c_1 \log(1/\e)$. Additionally, it is information-theoretically impossible to provide a confidence set whose size satisfies $|H(\e, n)| \log |H(\e,n)| < c_2 \log(1/\e)$.
\end{thm}
We can equivalently write the confidence set size as $|H(\e, n)| \asymp \log(1/\e) / W(\log(1/\e)) $, where $W$ is the Lambert $W$ function. This is $|H(\e, n)| \asymp \log(1/\e) / \log \log(1/\e) $ for sufficiently small $\e > 0$.

The bound on the size of the confidence set here is exponentially smaller than the lower bounds on the size of the confidence set for the well-studied uniform and preferential attachment tree models ~\cite{bubeck2017finding,banerjee2023degree} (both subpolynomial in $1/\e$). The fact that the bound here is logarithmic in $1/\e$ indicates that the geometry of the tree retains a lot of information about the history of the process. 

Our result for the 1-NN model can be extended to the setting where the underlying space is a thin strip in two dimensions, whose thickness depends on the error parameter $\e$; see Corollary \ref{cor:2d-strip}. As we shift to the general setting of the $d$-dimensional unit torus for fixed $d \geq 2$, while we do not retain the bounds from the one-dimensional setting, we prove that it is still possible to return a set whose size is subpolynomial in $1/\e$, and matches the curent best upper bound for root finding in uniform attachment trees.

\begin{thm}\label{thm:higher-dims}
    For the $d$-NN model with $d \in \N$, $d \geq 2$, for all $n \in \mathbb{N}$ and all sufficiently small $\e > 0$, there exists a $O(n^2 + n(1/\e)^{1/(\log \log (1/\e))})$-time embedded root finding algorithm that returns a confidence set $H(\e,n)$ of size satisfying $|H(\e, n)| \leq \exp\bpar{\frac{\log(1/\e)}{\log\log(1/\e)} + d \log\log(1/\e)}$. Additionally, there exists a constant $c > 0$ such that it is information-theoretically impossible to provide a confidence set whose size satisfies $|H(\e, n)| \log \left( {|H(\e, n)|}{d^{-1/d}}\right) \leq c \log(1/\e)/d.$
\end{thm}

\subsection{Challenges and proof ideas} \label{subsec:comparisons}

Root finding algorithms in the geometric setting have access to more information than in the combinatorial settings; namely, metric and embedded root finding algorithms get not only the tree that is formed but also information about edge lengths (metric setting) and geometric embedding (embedded setting). This may seem to suggest that finding the root is inherently easier in geometric settings than in combinatorial settings, where being `easier' corresponds to the required confidence set size being smaller. However, it turns out geometric attachment processes such as the random nearest neighbor process are widely different from combinatorial processes; this leads to new challenges to their analysis. 

Previous work on root finding for combinatorial processes \cite{bubeck2017finding, banerjee2023degree, banerjee2022root, contat2024eve} has relied on constructing suitable statistics that are useful in uncovering the ages of vertices. 
These statistics are typically based on the balance of subtrees rooted at neighbors of a vertex (e.g., for uniform attachment \cite{bubeck2017finding}) or on the degrees of vertices (e.g., for preferential attachment \cite{ banerjee2023degree, contat2024eve}), which can be analyzed via martingale methods. In contrast, in the geometric setting, it is difficult to find martingales that indicate the ages of vertices, which also concentrate well. Simple graph parameters or even distances don't adequately capture the ability of a vertex to acquire new descendants. Let us highlight why existing statistics for combinatorial processes are difficult to analyze for the random nearest neighbor process. 
First, analyzing sizes of subtrees is difficult because tracking the precise probabilities of connecting to different vertices becomes intractable due to the dependence of this on the specific geometric positions of vertices. Second, degrees are not well-concentrated enough for our purposes~\cite{lichev2024new} (similarly to in uniform attachment). Due to these challenges, naturally, root finding algorithms for random nearest neighbor trees will rely on different statistics than in combinatorial settings. 

For the random nearest neighbor process, using the fact that edges tend to become shorter over time, we focus on long edges, which are expected to have arrived early on in the process. Of course, the first edge may itself not be long, but it is likely to be near a long edge. More specifically, there exist some $\ell < 1, k \in \N$ depending on $\e, d$ such that, with probability at least $1-\e$, some edge of length at least $\ell$ arrives in the first $k$ steps of the process. We can bound the number of long edges using a packing argument, so this motivates our first attempt at an algorithm and analysis: return vertices that are within a graph-theoretic radius of $k$ of a long edge. Setting $k = 1$, we achieve a confidence size of $O\left(2^d/\e\right)$. To improve over this, we try to search within a radius of $k \geq 2$. However, the degrees scale as $\Theta(\log n)$~\cite{lichev2024new}, so searching within $k$-neighborhood could produce confidence sizes that scale with $n$ with this approach. 

We overcome this challenge with a second key idea: we prune the graph to a bounded-degree induced subgraph which includes the root vertex. In particular, this subgraph excludes so-called ``covered'' vertices that lie in geometric regions corresponding to edges, which cannot be the root due to a geometric argument. Our final algorithm is as follows: return vertices \textit{in this induced subgraph of bounded degree} that are within a radius of $k$ of an edge of length at least $\ell$. This algorithm returns a confidence set whose size is sub-polynomial in $1/\e$ and uniformly bounded in terms of $\e$ and $d$.

We specify the root finding algorithms for $d=1$ and $d \geq 2$ respectively in Algorithms~\ref{alg:1d_algorithm} and \ref{alg:higher-dimensional}, and analyze them respectively in Sections~\ref{sec:1d-proof} and \ref{sec:higher-dims}. In these sections, we also study lower bounds on the confidence size. To do so, we construct a family of trees with the properties that, first, a random nearest neighbor tree is in the family with probability at least $\e$ and, second, for each tree in the family the root has a lower maximum likelihood estimator value than most other vertices in the tree.

We now make some remarks about our results. First, using the approach outlined above, in the one-dimensional setting, we prove that the size of the confidence set needed is exponentially smaller than the known lower bounds on the confidence set size for uniform and preferential attachment trees. This indicates the power of geometric information in finding the root. In the setting of $d \geq 2$, our upper bounds on the confidence set size match those currently given for uniform attachment trees \cite{bubeck2017finding}; however, since the processes evolve rather differently and the algorithms used are also different, this may not point to deeper parallels in the complexity of root finding between the two settings. Second, in the existing literature on random nearest neighbor trees, much more precise statements can often be made in the setting of $d = 1$ than in the setting of $d \geq 2$ (e.g. \cite{penrose2006limit}). The strength of our bounds (for example, as reflected in the comparisons between upper and lower bounds on the confidence set size) parallel this difference between $d = 1$ and $d \geq 2$.

\subsection*{A note on the underlying geometric space.} We focus on the setting where the underlying geometric space is the unit torus $\T^d$ in $d$ dimensions, in order to avoid issues with boundary conditions. 
We believe that the results and proofs extend to other ``nice" $d$ dimensional sets such as the $d$-dimensional sphere.  

\subsection{Related work} \label{subsec:related_work}
The problem of finding the root of a randomly growing graph process is a fundamental question in the realm of \textit{network archaeology}, which is an area of research focused on understanding how well the history of a network can be reconstructed given access to the structure of a network at a later time. Root finding is well-studied for a wide range of combinatorial models of randomly growing graphs, such as uniform attachment trees \cite{bubeck2017finding, crane2021inference} and uniform attachment DAGs \cite{briend2023archaeology}, preferential attachment trees \cite{bubeck2017finding, contat2024eve}, and more general families of graph processes \cite{banerjee2022root, banerjee2023degree}. These algorithms often return sets of vertices whose degrees are large or whose position in the graph (with respect to sizes of subtrees rooted at their neighbors) is ``balanced.'' Lichev and Mitsche \cite{lichev2024new} raise the question of if root finding is possible for random nearest neighbor trees. In terms of how root finding can impact network archaeology more broadly, understanding how to find which vertex arrived first in the process can lead to broader insights into how to detect times of arrivals of vertices and understand which vertices arrived early or late in the process. For example, in the setting of uniform attachment processes, an algorithm for finding the root in \cite{bubeck2017finding} was later adapted to the setting of estimating the entire history of the process \cite{briend2024estimating}.

Real-world networks are often modeled by processes involving geometric components \cite{flaxman2006geometric, jacob2013spatial}. Therefore, it is both interesting and relevant to understand how the geometry of the processes can impact and facilitate the discovery of the root given a snapshot of the network at a later time. The random nearest neighbor model is a useful starting point due its simplicity to state and also because it appears as a limiting case of geometric models of real-world networks (more specifically, of geometric preferential attachment \cite{jordan2015phase}). Network archaeology for geometric network models is, to our knowledge, unstudied and it remains an interesting open direction to understand if or how our approaches extend beyond the random nearest neighbor tree setting.

There is a long line of research on online nearest neighbor graphs over $(0, 1)^d$; see \cite{penrose2006limit, wade2009asymptotic,penrose2005multivariate,wade2007explicit,trauthwein2022quantitative}. These papers primarily focus on understanding the asymptotic behavior of sums of power-weighted edge lengths (meaning, consider all of the edges in the tree, raise their lengths to some power, and sum this). Additionally, Aldous \cite{aldous2018random} studies properties of the Voronoi regions in the two-dimensional case for a related geometric randomly growing graph model. A more recent paper \cite{lichev2024new} studies combinatorial properties of the random nearest neighbor tree on $\T^d$. Many geometric and combinatorial properties of random nearest neighbor trees are still unstudied or unresolved; understanding such properties could lead to improved root finding guarantees.

\subsection{Notation} For any labeled tree $T$, we let $T^\circ$ denote the corresponding unlabeled tree. 

We label the vertices of a $d$-NN tree $T_n$ by their time of arrival $i \in [n]$. Often, when we study embedded root finding, we instead refer to the vertices by their geometric position $x_i \in \T^d$ (and always do so when we refer to the unlabeled $d$-NN tree $T_n^\circ$). 

For any labeled tree $T$ on $[n]$, for each $i \leq n$, we denote by $T[i]$ the subtree induced by vertices $[i]$. We denote its edge set by $E(T)$; in $T_n$, we may refer to edges via either the vertex labels $e = (i,j)$ or positions $e = (x_i, x_j)$. For two vertices $x, y \in \T^d$, we denote by $\dist(x,y)$ their distance in the torus metric, which corresponds to the length of the edge $(x,y)$ if it exists. 

We call an embedded labeled tree $T$ a ``valid nearest neighbor tree" if for each $1 < i \leq |T|$, there exists $j < i$ such that $(i,j) \in E(T)$ and $d(x_i,x_j) = \min_{\ell<i} d(x_i,x_\ell)$. Let the set of valid embedded labeled trees be $\cT$. Note that each $T \in \cT$ can be uniquely identified by its ordered set of positions $\{x_i\}_{i \in |T|}$. Then, an embedded unlabeled tree $T^\circ$ is valid if there exists some $T' \in \cT$ such that $(T')^\circ = T^\circ$.
Given a fixed valid $T \in \cT$ with vertices $\{x_i\}_{i \in |T|}$, a permutation $\sigma : [|T|] \to [|T|]$ is called ``feasible with respect to $T$'' if the embedded tree $T^\sigma$ formed by vertices $\{x_{\sigma(i)}\}_{i\in|T|}$ satisfies $(T^\sigma)^\circ = T^\circ$; in particular, $T^\sigma \in \cT$. That is, this permutation of the arrival times yields the same unlabeled tree as $T$ does. 

Throughout, we use $\log$ to denote the base $e$ logarithm. We use standard asymptotic notation $O(\cdot)$, $ \Omega(\cdot)$, $\Theta(\cdot)$, $o(\cdot)$, $\omega(\cdot)$. The constants in this notation do not depend on the dimension $d$, which we keep explicit. 

We use the notation $X \overset{d}{=} Y$ to denote that random variables $X$ and $Y$ are equal in distribution. Let $C(d)$ be the volume of the unit $d$-dimensional ball; that is, $C(d) = \pi^{d/2}/\Gamma(d/2 + 1)$, where $\Gamma$ is the gamma function.

\subsection{Organization} In section~\ref{section:simple-algorithm}, we present a metric root finding algorithm that returns vertices adjacent to suitably defined long edges. In sections~\ref{sec:1d-proof}, \ref{sec:thin-strip} and \ref{sec:higher-dims}, we study embedded root finding. In these sections, respectively, we prove Theorem \ref{thm:1d-upper-and-lower}, the corresponding result for thin two-dimensional strips, and Theorem \ref{thm:higher-dims}. We conclude by proposing conjectures and open questions in section~\ref{sec:open-questions}.

\section{Warm-up: Metric root finding}\label{section:simple-algorithm}

We first construct a metric root finding algorithm that simply returns vertices adjacent to a long edge, where the definition of ``long'' depends on $\e$. When we move to the setting of \textit{embedded} root finding in sections~\ref{sec:1d-proof}, \ref{sec:thin-strip} and \ref{sec:higher-dims}, we still rely on this idea of the root being graph-theoretically ``close'' to a long edge, while reducing the required confidence set size by using other available geometric properties. Our simple metric root finding algorithm is as follows.

\begin{alg}
    Return all vertices adjacent to edges of length at least $\ell = \e^{1/d} \cdot {\Gamma^{1/d}\left( \frac{d}{2} + 1\right)}{\pi^{-1/2}}$, where $\Gamma$ is Euler's gamma function. 
    Note that $\ell$ is chosen so that the volume of the $d$-dimensional ball of radius $\ell$ is $\e$.
\end{alg} 

Additionally, we show that all vertices in the returned confidence set arrived early in the process, where \textit{early} is defined as some function of $d$ and $1/\e$, uniformly bounded with respect to $n$. This is a desirable property out of an algorithm since all of the vertices returned in $H(\e,n)$ are in some sense informative, as opposed to arbitrary. 

\begin{rem}
    We note that the algorithm presented above is local, in the sense that every vertex can tell if it should be returned by looking within a fixed radius of itself for a long edge (in the case of this algorithm, the radius is one). This locality indicates that it is feasible to run the algorithm efficiently in a distributed setting.
\end{rem}

\begin{lem}
\label{lem:count-long-edges}
    This algorithm satisfies the property that, for all $\e > 0$ sufficiently small and all $n \in \N$, $\P \{ 1 \in H(\e,n)\} \geq 1 - \e \ad |H(\e,n)| \leq {2^{d}}/{\e}$. The algorithm runs in $O(n)$ time. Furthermore, there exists $c > 0$ such that with probability at least $1 - \e$, $|H(\e,n)| \subseteq \{1, \dots, \tau\}$ for $\tau = \exp(cd)\cdot (1/\e)\log(1/\e)$. 
\end{lem}

\begin{proof}
    \textsc{Correctness.} Let $B_d(\ell)$ denote the volume of the $d$-dimensional ball of radius $\ell$. 
    The first edge added to the graph is long with probability $1 - B_d(\ell) = 1 - \e$. 
    Therefore, for all $n \in \N$, the root is in the returned set with probability at least $1 - \e$.

    \textsc{confidence set size.} We now bound from above the number of vertices adjacent to long edges. When a vertex $t$ at position $x$ is added to $T_{t-1}$, a long edge is created if and only if there exists an empty open ball of radius $\ell$ around $x$.
    This implies that, in the subgraph of $T_n$ consisting of all vertices adjacent to long edges, each vertex occupies a disjoint ball of radius $\ell/2$.
    Our confidence set size can therefore be bounded from above by
    the number of disjoint balls of radius ${\ell}/{2}$, i.e.\ of volume $B_d\left({\ell}/{2}\right)$, that can be packed into $\T^d$, which is 
    \[ 
    \frac{1}{B_d\left({\ell}/{2}\right)} = \frac{2^d}{B_d(\ell)} = \frac{2^d}{\e}.
    \]

    \textsc{Runtime.} The algorithm takes $O(n)$ time since per vertex it takes constant time to search for an adjacent edge of length at least $\ell$.

    \textsc{All vertices returned arrived early.} We now prove that all of the vertices in the confidence set returned arrived in the first $\tau$ steps of the process.

    Let us first consider a space without wrap-around; that is, $[0, 1]^d$. For a vertex $x \in [0,1]^d$, let $\mathcal{C}$ be a (minimal size) set $\mathcal{C}(x) = \{C_i(x)\}_{i=1}^{\gamma_d(x)}$ of cones of angle $\pi/4$ centered at $x$ whose union covers the entire space;
    that is, $\gamma_d(x) = |\mathcal{C}(x)|$. 

    We make use of the following two lemmas regarding the size of $\mathcal{C}(x)$ for any $x$ and the relationship between $\mathcal{C}(x)$ and the diameter of cells in a Voronoi diagram.

    \begin{lem}{\cite[Lemma~5.5]{devroye2013probabilistic}}\label{lem:number-of-cones}
         For all $x \in \mathbb{R}^d$, $\gamma_d(x) \leq 3^d$. 
    \end{lem}

    \begin{lem}{(Observation from the proof of \cite[Theorem~5.1]{devroye2017measure})}\label{lem:voronoi-diameter}
        Consider any set $\mathcal{S}$ of points in $\mathbb{R}^d$. For $x \in \mathcal{S}$, let let $D_i(x) = \min_{x_i \in \mathcal{S} : x_i \in C_i(x)} \normd{x-x_i}$. Consider the Voronoi diagram given by $\mathcal{S}$. Then, for all $x \in \mathcal{S}$, the diameter of the Voronoi cell of $x$ is bounded above by $\sqrt{2} \max_{i} D_i(x)$. 
    \end{lem}

    Given these lemmas, we prove that with high probability all vertices returned arrived in the first $\tau = \exp(cd)\cdot (1/\e)\log(1/\e)$ steps of the $d$-NN process over $\T^d$. Observe that if by time $\tau$, the diameter of the Voronoi cell of $x$ is bounded above by $\ell$, all neighbors of $x$ added after time $\tau$ have corresponding edge length at most $\ell$ and are not included in $H(\e,n)$. 
    Therefore, if there exists $\tau$ such that with probability at least $1 - \e$, the diameter of the Voronoi cell of every vertex $x \in T_{\tau}$ is at most $\ell$, then we have that with probability at least $1 - \e$, the confidence set consists only of vertices added in the first $\tau$ steps. We use a union bound argument to show that this holds for $\tau = \exp(cd)\cdot (1/\e)\log(1/\e)$. We use the two above lemmas from \cite{devroye2013probabilistic, devroye2017measure} to bound the diameter of the Voronoi cells. 

    To complete the proof, we now bound the diameters of the Voronoi cells, using a similar argument as \cite{devroye2017measure}. We want to show that for $\tau = \exp(cd) (1/\e)\log(1/\e)$, 
    \begin{equation}\label{eq:voronoi-union}
        \mathbb{P}\left\{\exists x \in [\tau],\  i \in [\gamma_d(x)] \text{ such that } D_i(x) > \ell/\sqrt{2}\right\} \leq \e ,  
    \end{equation}
    which, combined with Lemma \ref{lem:voronoi-diameter}, yields that with probability $1-\e$, $D_i(x) \leq \ell$ for each $x \in T_\tau$ and each $i \in [\gamma_d(x)]$. Under this event, for sufficiently small $\e > 0$, no boundary issues occur as the diameter is on the order of $\e^{1/d}$. 
    
    We now show \eqref{eq:voronoi-union}. 
    By definition of $D_i(x)$, the event $\{ D_i(x) > \ell/ \sqrt{2} \}$ implies that there does not exist a $y \in T_{\tau}$ such that $y \in C_i(x)$ and $\normd{x - y} \leq \ell/\sqrt 2$. Observe that the volume of any cone of radius $\ell/\sqrt{2}$ of angle $\pi/4$ is at least $C(d) \left( {\ell}/{\sqrt{2}}\right)^d {\gamma_d(x)}^{-1}$. Since vertices are distributed i.i.d., for each $x \in [\tau], i \in [\gamma_d(x)]$, we can bound 
    \[ \P\{ D_i(x) > \ell/\sqrt{2}\} \leq  \left(1 -C(d) \left( \frac{\ell}{\sqrt{2}}\right)^d \frac{1}{\gamma_d(x)}\right)^{(\tau - 1) } \leq \exp\left(-(\tau - 1) C(d) \left( \frac{\ell}{\sqrt{2}}\right)^d \frac{1}{\gamma_d(x)}\right).\]
    Then, a union bounding over all $x \in [\tau]$ and $i \in [\gamma_d(x)]$, where we have $\gamma_d(x) \leq 3^d$ from Lemma~\ref{lem:number-of-cones}, we find that there exists $c > 0$ such that \eqref{eq:voronoi-union} holds for our choice of $\tau$.
\end{proof}

\section{Tight bounds for the confidence set size in 1D}\label{sec:1d-proof}

We now turn to embedded root finding. In this section, we prove Theorem~\ref{thm:1d-upper-and-lower}. For our upper bound, we define an algorithm that, as introduced in Section~\ref{section:simple-algorithm}, uses the idea that the root should be close to a long edge. Another useful geometric property is that the root is never in an interval covered by an edge. As the torus $\T^1$ rapidly becomes covered by edges, this property helps keep the confidence set small.

For lower bounds on the confidence set size, we also use the structure of the subgraph of vertices that are not covered by any edge. The goal is to construct a family of trees that are reasonably likely to occur and for which the root is difficult to identify. We construct this family to be trees that begin with a path of many uncovered vertices.

Before describing our algorithm and proofs, we first introduce some structural results about the one-dimensional nearest neighbor tree.

\subsection{Structural aspects of the 1-NN tree}

Each edge $e = (x_1, x_2) \in E(T_n)$ edge covers a geometric interval; we denote this interval $I_e$. Also, recall that $T_n[t]$ denotes the induced subtree of vertices $\{1, \dots, t\}$. 

\begin{defn}[Uncovered vertices and edges]\label{def:uncovered}
   A vertex $x \in T_n$ is covered if there exists an edge $e \in E(T_n)$ such that $x \in I_e$. Otherwise, $x$ is uncovered.
   Similarly, we say that an edge $e = (x_1, x_2)$ is covered if either of its endpoints $x_1$ or $x_2$ is covered. The edge is otherwise uncovered. We denote the induced subgraph of uncovered vertices by $\U_n$.
\end{defn}

Note that the space covered by edges of $T_n$ in $d = 1$ forms a continuous interval, which we 
define formally as follows.

\begin{SCfigure}[.8][hbtp]
    \centering
    \includegraphics[width=.28\linewidth]{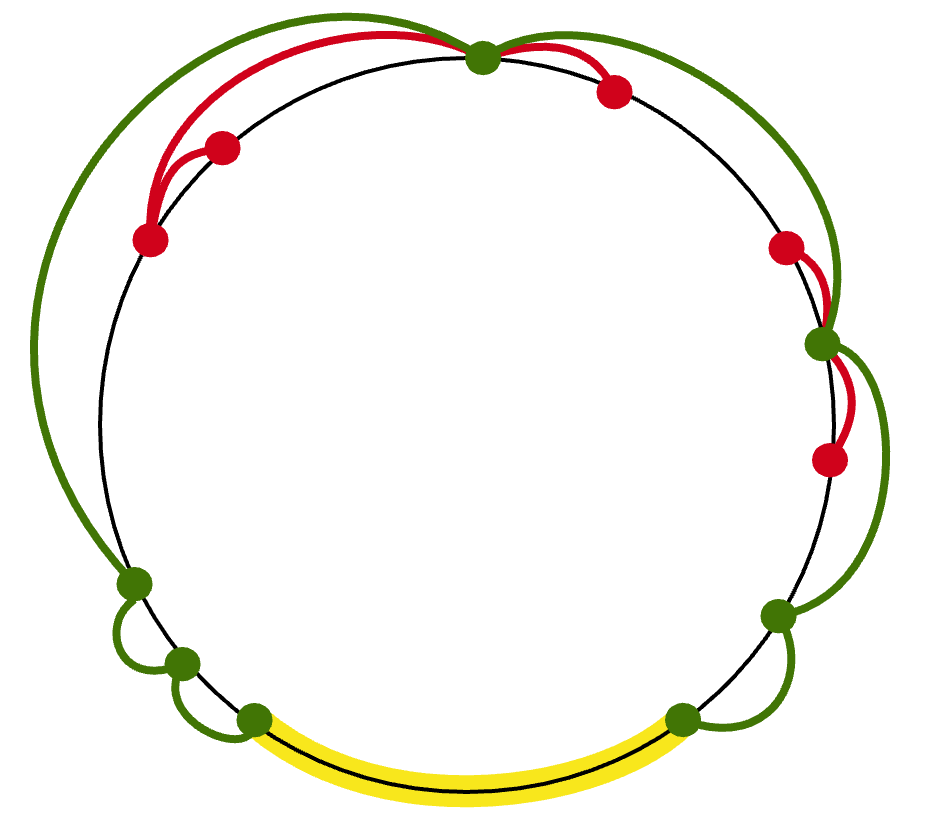}
    \hspace{.25cm}
    \caption{In this example of a 1-NN of size $n = 12$, the subtree of uncovered vertices and edges is colored green. The covered vertices and edges are colored red. The remaining uncovered space $\L_n$ is highlighted in yellow.}
    \label{fig:1d-uncovered-vertices}
\end{SCfigure}

\begin{defn}[Remaining uncovered interval]
    The remaining uncovered interval $\L_t$ at time $t$ is $\T^1 \setminus \bigcup_{e \in E(T_n[t-1])} I_e$, the continuous interval of $\T^1$ that does not intersect with any existing edge's covered interval.
\end{defn}
We now make a few observations that can be easily derived from the above definitions. 
First, note that a vertex $t \in T_n$ is uncovered if and only if it is in the remaining uncovered space at its time of arrival, i.e.\ $x_t \in \L_{t-1}$.   
Indeed, due to the nature of the $1$-NN attachment process, once a vertex is uncovered, it cannot become covered in the future. 
Also, if a vertex is uncovered, then all of its ancestors on the path to the root of the tree must also be uncovered. 

Our algorithm for root finding in $d=1$ relies on geometric and structural properties of the uncovered subgraph $\U_n$ (see Lemma \ref{lem:1d-uncovered-length}). We also require control of the size of the remaining interval $\L_t$ (see Lemma~\ref{lem:1d-uncovered-tails}).  

For $i \in \N$, we define the following random variables: let $X_i$ denote the time at which the $i$\textsuperscript{th} uncovered vertex is added, and let $L_i$ denote the length of the remaining uncovered interval after the $i$\textsuperscript{th} uncovered vertex is added. That is, $X_1 = 1, X_2 = 2, X_3 \geq 3$, and $L_1 = 1$. Letting $U_1, U_2, \dots$ be i.i.d.\ Unif$[1/2,1]$ random variables, we have 
\begin{equation}\label{eq:Ti-and-Li}
    X_{i+1} = X_i + \Geo(L_i) \ \ad \  L_{i+1} = U_i L_i, 
\end{equation}
where $X_1 = 1, L_1 = 1$, and $\Geo(p)$ is the number of independent $\Ber(p)$ trials until the first success. Note that the number of uncovered vertices is $|\U_n| = \max\{i \in \N : X_i \leq n\}$. 

\begin{lem} \label{lem:1d-uncovered-length}
    The induced subgraph of uncovered vertices $\U_n$ in $T_n$ is a path.
    In addition, if $T_n$ has $K+1$ uncovered vertices, it has expected size $X_{K+1}$ with $\E X_{K+1} = 1 + ({(\log 4)^K - 1})/({\log 4 - 1})$.  
\end{lem}

\begin{proof}
    The first statement follows since each vertex in the induced subgraph can only have one uncovered neighbor on either side.
    Next, we can compute that 
    \[
    \E \Geo(U_1) = \sum_{k=1}^\infty k \int_{1/2}^1 p(1-p)^{k-1} 2dp = \int_{1/2}^1 (2/p)dp = 2\log 2
    \]
    and that similarly, $\E \Geo(U_{j}\cdots U_1) = (2\log 2)^{j}$ for each $j \in \N$. Therefore, from \eqref{eq:Ti-and-Li}, the expected size of a 1-NN tree with $K+1$ uncovered vertices is 
    \[
    \E X_{K+1} = 2 + \E \Geo(U_1) + \E \Geo(U_1U_2) + \dots + \E \Geo(U_{1}\cdots U_{K-1}) = 1 + \sum_{j=0}^{K-1} (2\log 2)^j,
    \]
    which yields the second statement.
\end{proof}

\begin{lem}\label{lem:1d-uncovered-tails}
    For a $1$-NN tree with $K + 1$ uncovered vertices, the size of the remaining uncovered interval $L_{K+1}$ satisfies 
    \[-\log L_{K+1} \overset{d}{=} \sum_{j=1}^{K} E_j,\]
    where $E_j$ are i.i.d.\ truncated exponential $\mathrm{TEXP}(1,\log 2)$ random variables, i.e., $E_j = \Tilde{E_j}\ind{\Tilde{E_j} \leq \log 2}$ for i.i.d.\ $\Tilde{E_j} \sim \Exp(1)$.
\end{lem}
\begin{proof}
    Since each new uncovered edge removes a $\Unif(0,1/2)$ fraction of the remaining uncovered interval, the interval correspondingly shrinks by a $\Unif[1/2,1]$ factor at each step. Therefore, 
    $L_{K+1} \overset{d}{=} \prod_{j=1}^K U_j$,
    where $U_j$ are i.i.d.\ $\Unif[1/2,1]$ random variables. The statement then follows since $-\log \Unif[1/2,1] \overset{d}{=} \mathrm{TEXP}(1,\log 2)$.
\end{proof}

\subsection{Algorithm and upper-bound on the confidence set size} \label{subsection:1d-algorithm}
Consider a fixed $\e > 0$ and a constant $C > 1$ depending on $\e$. Let $k$ satisfy $k \log k = C \log(2/\e)$. Our root finding algorithm in $d = 1$ is given below.

\begin{alg}\label{alg:1d_algorithm} 
Initialize $H(\e,n)$ to be empty. While $|H(\e,n)| \leq 3k$, add uncovered vertices within graph distance $k$ of an uncovered edge of length greater than $\ell = 1/(5k)$. More precisely, 
\smallskip 

\noindent\textbf{Step 1: Find uncovered vertices.} This can be done as follows. For each unidentified vertex $x$, search among neighbors $y$ of $x$ for an edge $e = (y,z)$ such that $x \in I_e$ (as in Definition~\ref{def:uncovered}). If there is no such $e$, $x$ is marked as uncovered. Otherwise, if there exists such an $e$, then $y$ is marked as parent of $x$, and the entire subtree of $x$ is identified and marked as covered. This marks all vertices; note that a vertex may be updated from uncovered to covered.

\smallskip 
\noindent\textbf{Step 2: Building the confidence set.} For each uncovered edge of length greater than $\ell = 1/(5k)$, add all uncovered vertices within graph distance $k$ while $H(\e, n)$ is of size less than $3k$.
\end{alg}

\begin{figure}[hbtp]
    \centering
    \includegraphics[width=15cm]{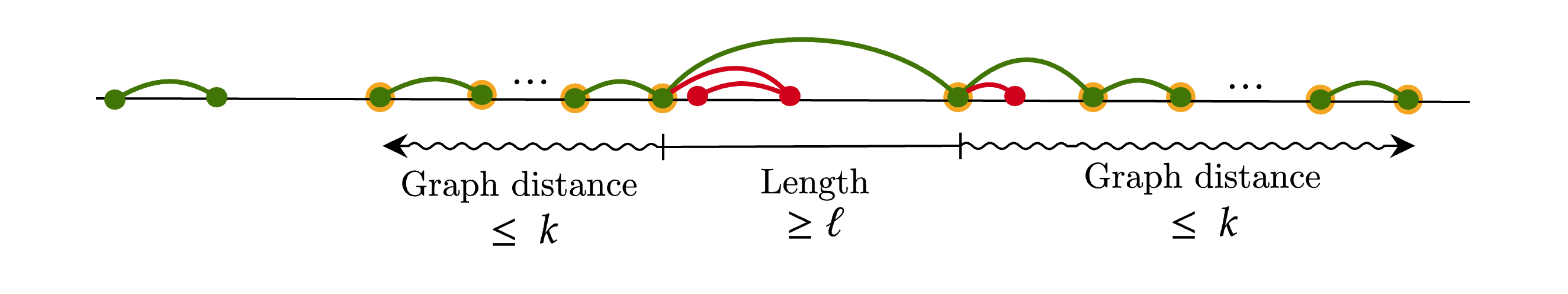}
    \caption{Our algorithm returns uncovered vertices within a graph distance of $k$ of an edge of length at least $\ell$, where $k$ and $\ell$ are defined with respect to $\e$. In this figure, the vertices highlighted in orange are added to the confidence set since they are uncovered and graph-theoretically near an edge of length at least $\ell$. In the figure, uncovered vertices and edges are green and covered vertices and edges are red.}
    \label{fig:1d-returned-set}
\end{figure}

We now argue about the correctness, confidence set size, and runtime of the algorithm to complete the proof of the upper bound in Theorem~\ref{thm:1d-upper-and-lower}.
\begin{lem} \label{lem:uncovered-algo-dimension-1} Let $d = 1$. 
For every sufficiently small $\e > 0$, there exists $C > 0$ such that for $k$ satisfying $k \log k \geq C \log(2/\e)$, the algorithm above returns a set of size $|H(\e,n)| \leq 3k$, such that with probability at least $1-\e$, the root is successfully returned in the set.
Additionally, the runtime of the algorithm is polynomial in $n$ and $\log(1/\e)$.
\end{lem}

\begin{proof} 
\textsc{Correctness.} 
We first argue that with probability at least $1 - \e/2$, the confidence set includes all vertices of distance $\leq k$ away from an edge of length $\geq \ell$. To do so, we use Lemma~\ref{lem:1d-uncovered-tails}. By Lemma \ref{lem:1d-uncovered-tails}, the remaining uncovered space $L_{k}$ after $k$ uncovered vertices are placed satisfies 
\begin{align}\label{eq:L_k-upper-tail}
\P\bcurly{L_{k} \geq \frac{1}{5k} } \nonumber &= \P \Big\{ \sum_{j=1}^{k-1} E_j \leq \log(5k) \Big\} \\ 
&\leq \binom{k-1}{(k-1)/2} \P\bcurly{ E_j \leq \frac{2\log(5k)}{k-1}}^{(k-1)/2} \leq \binom{k-1}{(k-1)/2} \bpar{\frac{4\log (5k)}{k-1}}^{(k-1)/2} \nonumber\\ 
&\leq \exp\bbr{ - \frac{k-1}{2} (\log (k-1) - \log(4\log 5) - \log \log k - \log(2e))}. \end{align}

For any sufficiently small $\e$, there exists $C > 0$ such that if $k \log k \geq C\log(2/\e)$, this probability is bounded above by $\e / 2$. 
Therefore, with probability at least $1-\e/2$, all later uncovered edges have length less than $1/(5k)$. In this case, the set of vertices of distance $k$ away from any long edge is a subset of the first $k$ uncovered vertices, plus $k$ uncovered points to either side of this path, for a total of at most $3k$ vertices. Therefore, with probability at least $1 - \e/2$, the confidence set includes all vertices of graph distance at most $k$ from an edge of length at least $\ell$.

Therefore, to argue that $\P\{1 \not\in H(\e, n)\} \geq 1 - \e$, it only remains to argue that the probability that none of the first $k$ uncovered edges has length $\geq \ell$ is at most $\e/2$. The failure probability $\P\{1 \not\in H(\e, n)\}$ can be upper bounded by the event $E$ that the first $k$ uncovered edges are all short, i.e., of length less than $\ell$. Furthermore, note that the length of the $k$th uncovered edge is uniform over $1 - \sum_{i=1}^{k-1}L_i$ where $L_i$ are the lengths of the previous $k-1$ uncovered edges. We have
\begin{equation}\label{eq:1d-prob-error}
    \P\{E\} \leq 2\ell  \frac{2\ell}{1-\ell} \cdots \frac{2\ell}{1-(k-1)\ell} < \bpar{\frac{2\ell}{1-k\ell}}^k = (5\ell/2)^k 
\end{equation}
since $1 - k\ell = 4/5$ for $\e > 0$ sufficiently small. For this failure probability to be upped bounded by $\e / 2$, we require 
\[
k \log(2k) \geq \log(2/\e),
\]
which is satisfied by our choice. Therefore, the root is successfully returned in the confidence set with probability at least $1 - \e$.

\textsc{confidence set size.} The confidence set size of the algorithm is at most $3k$ by construction.

\textsc{Runtime.} 
In the first step of the algorithm, we mark all vertices as covered or uncovered. This relies on searching within a fixed graph radius and updating labels of descendants, which takes at most $O(n^2)$ time total.
In the second step, for each uncovered edge of length greater than $\ell = 1/(5k)$, we look for uncovered vertices within graph distance $k$ of the edge. As noted in Lemma \ref{lem:1d-uncovered-length}, the induced subgraph of uncovered vertices is a path. Note also that the number of uncovered edges of length greater than $\ell = 1/(5k)$ is at most $1/\ell = 5k$. Therefore, the runtime of Step 2 is bounded above by $\log^2(1/\e)$.
Therefore, the runtime of the algorithm is $O(n^2 + \log^2(1/\e))$.
\end{proof}

\subsection{Lower bound on the confidence set size}
\label{subsec:lowerbounds-1d}

To prove a lower bound on the size of $H(\e,n)$, we look at the maximum likelihood estimator for the root on a family of trees that occurs with probability at least $\e$, as in \cite{bubeck2017finding}.  

Given a fixed embedded tree $T \in \cT$ of size $n$ with vertices $\{x_i\}_{i \in [n]}$, we consider all possible arrival orderings. 
Let $\sigma: [n] \to [n]$ be a permutation, which we think of as assigning an arrival order to the $n$ vertices. Recall that we call $\sigma$ ``feasible" if $T^\sigma$ satisfies $(T^\sigma)^\circ = T^\circ$. 

Given an unlabeled tree $T^{\circ}$ of size $n$ embedded in $\T^1$, the maximum likelihood estimator (MLE) for the root returns the vertex $i \in [n]$ maximizing the posterior probability of $i$ being the root. For the problem of determining which vertex is the root, this is the Bayes optimal answer given a uniform prior. We denote the probability of $i$ being the root as
\begin{equation}\label{MLE}
\MLE_{T^\circ}(i) = \sum_{\substack{ \sigma: [n] \to [n] \\ \sigma(i) = 1}} \P\{T_n = T^\sigma \mid T_n^\circ = T^\circ \}  =  \sum_{\substack{\sigma: [n] \to [n] \\ \text{$\sigma$ is feasible and } \sigma(i) = 1}} \P\{T_n = T^\sigma \mid T_n^{\circ} = T^\circ\}.
\end{equation}
On the right hand side, $\P\{T_n = T^\sigma | T_n^{\circ} = T^\circ\}$ is the probability that a 1-NN tree conditioned its unlabeled tree being $T^{\circ}$ has arrival times of vertices corresponding to the permutation $\sigma$. The second equality follows by noting that if $\sigma$ is not feasible, then $\{ T_n = T^\sigma \} \cap \{ T_n^\circ = T^\circ\} = \es$. 
Also, recall that a $d$-NN tree can be generated by first drawing $n$ points in $\T^d$, then drawing a random permutation $\sigma$ for the timestamps, therefore, in fact, for each feasible $\sigma$, the probability $\P\{T_n = T^\sigma \mid T_n^\circ = T^\circ \}$ is equal.

The MLE corresponds to the optimal approach for finding the root; i.e., if one wants a confidence set of size $K(\e)$, the smallest is the set $H_{\MLE}(\e,n)$ consisting of the $K(\e)$ vertices with largest posterior probability values.
To prove lower-bounds on $|H(\e,n)|$, we wish to show that, for a family of sufficiently likely tree configurations, the posterior probability of the root is small. As argued in \cite{bubeck2017finding}, it suffices to consider $H_{\MLE}(\e, K(\e) + 2)$ due to the monotonicity of the root finding property.

With this approach, we now prove the lower bound on the confidence set size in the one-dimensional setting from Theorem~\ref{thm:1d-upper-and-lower}: any root finding algorithm must return a set of size $K(\e)$ satisfying 
\begin{equation}\label{eq:1-d-lower-bound}
    (K(\e) + 1)\log(K(\e) + 5) \geq \log(2/\e).
\end{equation}

\begin{SCfigure}[.48][hbtp]
    \centering
    \includegraphics[width=.7\linewidth]{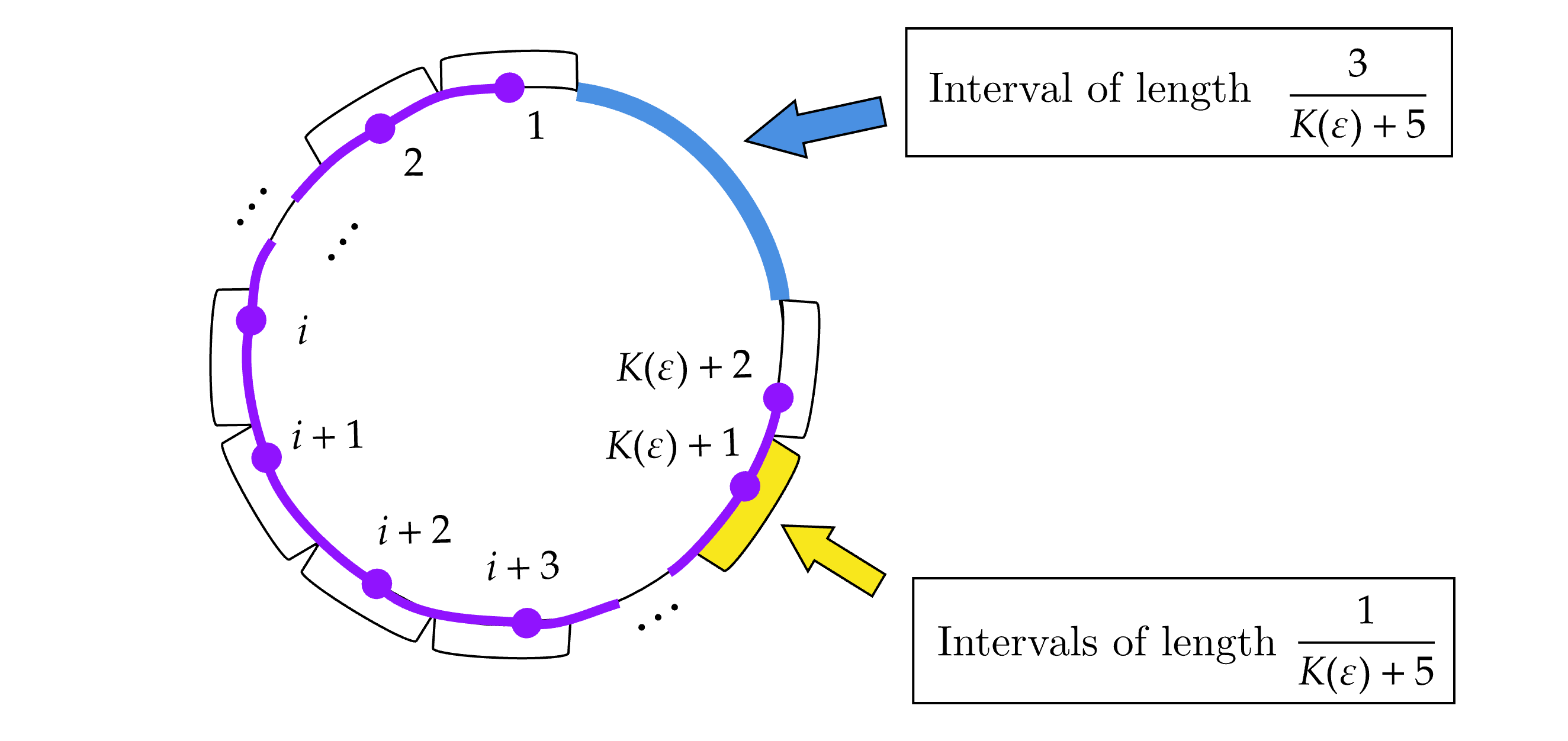}
    \caption{An example of a tree in the family $\F$ of trees considered for the lower bound on $|H(\e,n)|$ in $d = 1$. Vertices are placed sequentially in intervals of length $1/({K(\e) + 5})$ in $\T^1$. An interval of length ${3}/({K(\e) + 5})$ is left empty between the root and the last vertex added. We prove that the posterior probability of the root is lower than that of all vertices except the last one.}
    \label{fig:1d-lowerbound}
\end{SCfigure}

Consider a family $\F$ of $1$-NN trees of size $K(\e)+2$ defined as follows. 

\medskip 
\noindent \textbf{The tree configurations.} Let $T_{K(\e) + 2} \in \F$ have vertex set $\{x_t\}_{t \in [K(\e) + 2]}$, where $x_t$ is the $t$-th vertex added. First, without loss of generality, let $x_1 = 0$. 
Next, we define a set of intervals $\{I_i\}_{1 \leq i \leq K(\e)+5}$ as follows: for each $i \in [K(\e)+5]$, $I_i$ has length $1/(K(\e)+5)$ and is centered around $(i-1)/(K(\e)+5)$. 
For every $t \in \{2, 3, \dots K(\e) + 2\}$, $x_t \in I_t$. Note that $I_{K(\e) + 3}$, $I_{K(\e) + 4}$, and $I_{K(\e) + 5}$ are empty. See Figure~\ref{fig:1d-lowerbound} for an illustration of the construction.

\smallskip

We now prove the lower bound on the confidence set size in $d=1$. 
\begin{proof}[Proof of the lower bound of Theorem~\ref{thm:1d-upper-and-lower}] The proof follows from the following three claims:
\begin{enumerate}
    \item For all $T_{K(\e) + 2} \in \F$, for all $t \in \{1, 2, \dots, K(\e) + 1\}$, $x_{t + 1}$ connects to $x_t$. \label{1d-lower-bound-connections}
    \item A random $1$-NN tree is in $\F$ with probability at least $\e$ when $K(\e)$ satisfies \eqref{eq:1-d-lower-bound}.

    \item For all $T_{K(\e)+2} \in \F$, $\MLE_{T_{K(\e)+2}^\circ}(1) \leq \MLE_{T_{K(\e)+2}^\circ}(t)$ for every $t \in \{2, 3, \dots, K(\e) + 1\}$. \label{claim:mle-root-1d}
\end{enumerate}
Using these claims, we have that for $K(\e)$ satisfying \eqref{eq:1-d-lower-bound}, with probability at least $\e$, a random $1$-NN tree is in $\F$. Then, the root has a lower posterior probability than at least $K(\e)$ other vertices: any algorithm returning $K(\e)$ vertices will therefore fail to return the root. This completes the proof. We now prove the above claims. 

    \smallskip
    For (i), by construction, for any $t \leq K(\e) + 1$, $\min_{t' \leq t} \dist(x_{t+1}, x_{t'}) = \min ( \dist(x_{t+1}, x_t), \dist(x_{t+1}, x_1) )$ since $x_{t+1}$ is geometrically separated from all $1 < t' < t$ by either $t$ or $1$. We have $\dist(x_{t+1}, x_t) \leq {2}/({K(\e) + 5})$, while $\dist(x_{t+1}, x_1) \geq {3}/({K(\e) + 5})$ due to the empty intervals, so $x_{t+1}$ connects to $x_t$.

    \smallskip 
    For (ii), after placing $x_1$ arbitrarily, each of the remaining vertices is required to fall in its own adjacent interval of length $1/(K(\e)+5)$. Additionally, there are two choices of directions in which to sequentially place the future vertices. Therefore, a random 1-NN tree is in $\mathcal{F}$ with probability $2 \left(K(\e)+5\right)^{-(K(\e) + 1)}$. 

    \smallskip For (iii),
    for each $T_{K(\e) + 2} \in \F$, consider the unlabeled tree $T_{K(\e) + 2}^{\circ}$. By claim~\ref{1d-lower-bound-connections}, the only feasible permutation $\sigma$ such that $\sigma(1) = 1$ is the identity permutation where each $\sigma(t) = t$ (because, otherwise, feasibility is violated). Similarly, by claim~\ref{1d-lower-bound-connections}, for any $t \in \{2, 3, \dots, K(\e)+2\}$, the feasible permutations $\sigma$ such that $\sigma(i) = 1$ are the permutations satisfying $\sigma(t) < \sigma(t + 1) < \sigma(t + 2) < \dots < \sigma(K(\e) + 2)$ and $\sigma(t) < \sigma(t - 1) < \sigma(t - 2) < \dots < \sigma(1)$. Note that for each $i \in \{2, \dots, K(\e) + 1\}$, there exist at least two such permutations. 
    Since all feasible permutations are equally likely given $T_{K(\e)+2}^\circ$, which follows from the definition of the $1$-NN model, there exist at least $K(\e)$ vertices with higher posterior probability values than the root. 
\end{proof}

\section{Embedded root finding in a thin 2D strip}\label{sec:thin-strip}

We now consider the setting of random nearest neighbor trees evolving on a thin two-dimensional strip with a height of $h_\e = \text{poly}(\e)$. This highlights generalizations of the one-dimensional setting as well as potential limitations we encounter in moving to the higher-dimensional setting. 
The following result shows that the bounds from the $d=1$ setting transfer over to the setting where the space is a thin two-dimensional strip.

\begin{cor}\label{cor:2d-strip}
    The following holds for $\e > 0$ sufficiently small. Consider the $2$-NN model on a thin torus $[0,1] \times [0, h_\e]$. If $h_\e = O(\e^{5})$, there exists a $O(n^2 + \log^2(1/\e))$-time embedded root finding algorithm that returns a set of size $|H(\e,n)| = O\left(\frac{\log(1/\e)}{\log\log(1/\e)} \right)$ that includes the root with probability at least $1 - \e$. Additionally, if $h_\e = O\left({\log^{-1}(1/\e)}\right)$, it is information-theoretically impossible to provide a confidence set whose size satisfies $|H(\e,n)| = \Omega\left(\frac{\log(1/\e)}{\log\log(1/\e)} \right)$.
\end{cor}

\subsection{Upper bound on the confidence set size}

For a $2$-NN tree $T_n$ over a two-dimensional strip $[0,1]\times [0,h_\e]$, let the vertices be denoted $x_i \equiv (x_i^1, x_i^2)$ for $i \in [n]$. Let $\pi(T_n)$ be the projection of the tree onto $[0,1]$, with graph adjacencies maintained. That is, $\pi(T_n)$ has vertex set $\{x_i^1\}_{i \in [n]}$ and edge set $\{(x_i^1, x_j^1)\}_{(i,j) \in E(T_n)}$. The algorithm for root finding in the two-dimensional strip is as follows.

\begin{alg}  
Project $T_n^{\circ}$ to yield $\pi(T_n^\circ)$, and run the root finding algorithm for the one-dimensional random nearest neighbor graphs from Section~\ref{subsection:1d-algorithm} with $k$ satisfying $k \log k = 3\log(4/\e)$. 
\end{alg} 

Note that $\pi(T_n)$ may not be a valid 1-NN tree. That is, there may exist $t, t' \in [n]$ such that $(x_t^1, x_{t'}^1) \in E(\pi(T_n))$, with $t' < t$ but $i \neq \argmin_{i \in [t-1]} d(x_t^1, x_i^1)$. 
However, we argue that, with probability $1-\e$, the subgraph of uncovered vertices in $\pi(T_n)$ added in the \textit{first $\text{poly}(1/\e)$ steps} is ``valid" as a 1-NN tree, that is, it is the same as the subgraph of uncovered vertices in the 1-NN tree formed by points $\{x_i^1\}_{i \in [n]}$.

\begin{figure}[hbtp]
    \centering
    \includegraphics[width=.58\linewidth]{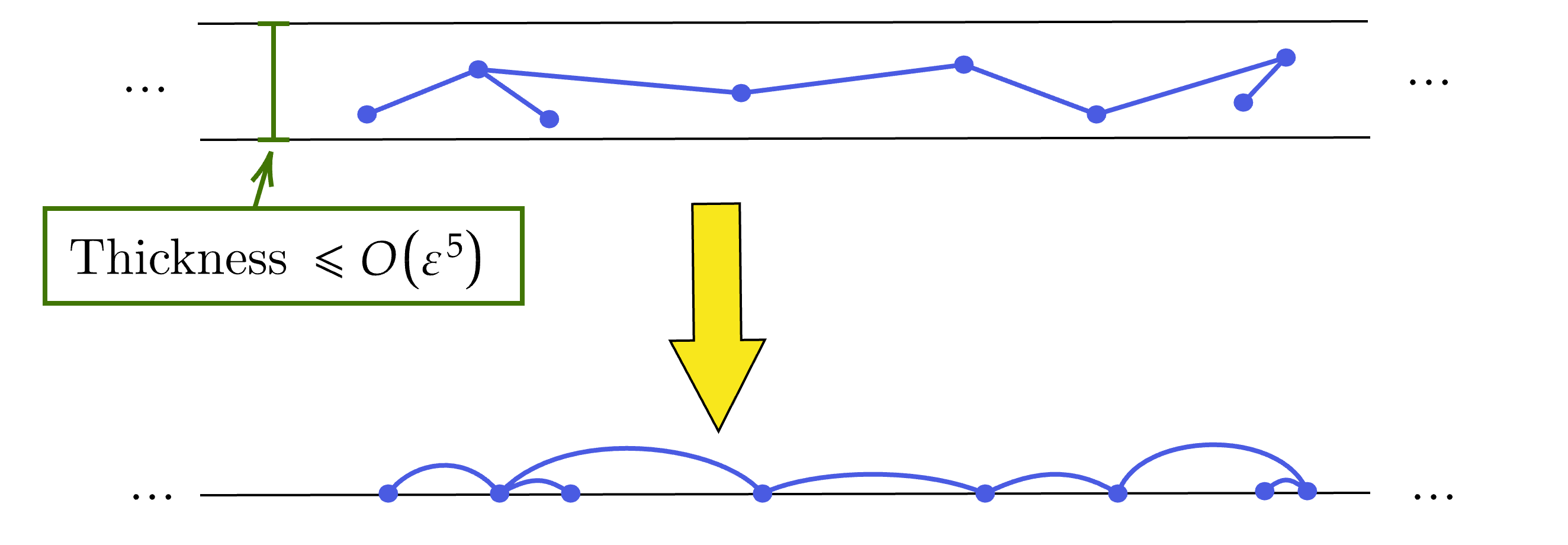}
    \caption{When the thickness of the strip is small enough, we argue that when projecting to $\T^1$, the uncovered subgraph of $\pi(T_t)$ up to time $t = \text{poly}(1/\e)$ is consistent with the uncovered graph of a $1$-NN tree with vertices in the same locations. We can therefore use algorithms from the $d=1$ setting over the thin two-dimensional torus.}
    \label{fig:2d-strip}
\end{figure}

First, we define uncovered vertices in the two-dimensional strip to be vertices uncovered in the projection to $\T^1$; note that the uncovered subgraph of $\pi(T_n)$ again has maximum degree 2. 

\begin{defn}\label{def:2d-strip-uncovered}
    For a $2$-NN tree $T_n$ over a two-dimensional strip $[0, 1] \times [0, h_\e]$, we call a vertex \textit{uncovered} if it is uncovered in the one-dimensional graph $\pi(T_n)$ in the sense of Definition \ref{def:uncovered}.
\end{defn}

Let $k$ satisfy $k \log k = 3\log(4/\e)$, as in Lemma \ref{lem:uncovered-algo-dimension-1}; that setting $C=3$ suffices for our purposes. We first argue that with probability at least $1-\e/2$, the first $k$ uncovered edges arrive before a fixed time $V_k$, and all edges added before then are long.

\begin{claim}\label{claim-4-strip}
    Let $T_n$ be a $2$-NN tree over a two-dimensional strip $[0, 1] \times [0, h_\e]$. With probability at least $1 - \e/2$, the first $k$ uncovered edges arrive before time $V_k = 3\left( {4}/{\e}\right)^{1 + 3 \log \log 4}$ and all edges $(x_\ell^1, x_{\ell'}^1) \in E(\pi(T_n))$ added before $V_k$ have length bounded from below by $d(x_\ell^1, x_{\ell'}^1) \geq L = \e/(8V_k^2)$.  
\end{claim}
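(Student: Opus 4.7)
The plan is to verify the two conjuncts of the claim separately, allocating failure probability $\e/8$ to a projection-separation event and $\e/4$ to a Markov bound on the arrival time of the $(k{+}1)$st uncovered vertex, and then combine by a union bound. Let $E_1$ denote the event that every pair of distinct x-coordinates among the first $V_k$ vertices is separated by at least $L$ on $\T^1$.

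Since each $x_t^1$ is uniform on the unit circle, a given pair lies within torus distance $L$ with probability at most $2L$, so a union bound over the $\binom{V_k}{2} \le V_k^2/2$ pairs gives $\P(E_1^c) \le V_k^2 L = \e/8$. On $E_1$ every edge of $\pi(T_n[V_k])$ has projected length at least $L$, which is the second conclusion. For the first conclusion, I would argue that on $E_1$ together with $h_\e = O(\e^5) \ll L$, the projected tree $\pi(T_n[V_k])$ coincides with the 1-NN tree on $\{x_t^1\}_{t \le V_k}$: under $E_1$ the horizontal gap between any two of these points is at least $L$ while the vertical gap is at most $h_\e$, and for the right choice of constants the ordering of 2D torus distances is dictated entirely by the horizontal component, so the 2D nearest-neighbor of each early vertex coincides with its 1D nearest-neighbor. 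Under this coupling, Definition~\ref{def:2d-strip-uncovered} reduces to Definition~\ref{def:uncovered} and Lemma~\ref{lem:1d-uncovered-length} applies directly, giving $\E X_{k+1} \le C(\log 4)^k$ for a universal constant $C$. Using $k\log k = 3\log(4/\e)$ yields $k \le 3\log(4/\e)/\log\log(4/\e)$ and hence $(\log 4)^k \le (4/\e)^{3\log\log 4}$, so Markov's inequality gives
\begin{equation*}
\P\{X_{k+1} > V_k \mid E_1\} \le \frac{\E X_{k+1}}{V_k} \le \frac{C(4/\e)^{3\log\log 4}}{3(4/\e)^{1+3\log\log 4}} = \frac{C\e}{12} \le \frac{\e}{4}
\end{equation*}
for $\e$ sufficiently small. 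A union bound over $E_1^c$ and this conditional Markov bound then yields total failure probability at most $\e/2$, as desired.

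The main obstacle is the 2D-to-1D coupling step: under $E_1$ the squared 2D distance between any two early vertices equals the squared horizontal distance plus a vertical error of at most $h_\e^2$, and one must verify that this perturbation is never enough to flip the ordering of candidate nearest-neighbors. The easy case is two candidates on the same horizontal side of a query vertex, where the horizontal gap $\ge L$ dominates the $h_\e^2$ correction; the subtle case is two candidates on opposite sides with nearly symmetric horizontal distances, where a slightly finer argument (or a mild strengthening of $E_1$ that also controls horizontal near-symmetries) is needed to rule out a flip. Choosing the implicit constant in $h_\e = O(\e^5)$ sufficiently small relative to the constant defining $L \approx \e^5$ makes the coupling robust and allows the uncovered dynamics of $\pi(T_n)$ to be governed entirely by the 1D structural lemmas developed in Section~\ref{sec:1d-proof}.
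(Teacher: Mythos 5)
Your proof takes a genuinely different route from the paper's. The paper's own proof of this claim does \emph{not} perform the 2D-to-1D coupling at this stage: it invokes Lemma~\ref{lem:1d-uncovered-length} directly on $\pi(T_n)$ for the Markov bound, and bounds the probability of a short projected edge by $\sum_{i\leq V_k} 2L(i-1) \leq 2LV_k^2 = \e/4$; the coupling argument is deferred entirely to Claim~\ref{claim-6-strip}, which is stated to hold \emph{on the event of} Claim~\ref{claim-4-strip}. You instead front-load the coupling into this claim. That is a reasonable alternative structure, and your arithmetic for $\E V_k$, $\P(E_1^c)$, and the overall union bound is sound, but the plan as written has a gap in the coupling step that you yourself flag and then resolve incorrectly.

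The gap is this: your event $E_1$ only enforces pairwise separation $|x_i^1-x_j^1|\geq L$ among the first $V_k$ projections. This does \emph{not} control $\bigl|\,|x_t^1-x_i^1|-|x_t^1-x_j^1|\,\bigr|$ when $x_i,x_j$ sit on opposite sides of $x_t$; two candidate parents can be separated by $2\alpha+\delta\geq L$ while their horizontal distances to $x_t$ differ by an arbitrarily small $\delta$, in which case a vertical offset of order $h_\e$ can flip the 2D nearest neighbor. Shrinking the implicit constant in $h_\e = O(\e^5)$ does not repair this, because $\delta$ can be made arbitrarily small independently of $h_\e$; what is needed is a separate probabilistic event ruling out such near-symmetries. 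This is exactly what the paper's Claim~\ref{claim-6-strip} supplies via the event $A = \{\forall i,\ell,\ell':\ |d(x_i^1,x_\ell^1)-d(x_i^1,x_{\ell'}^1)|>h_\e^2/L\}$, with its own union-bounded failure probability $kV_k^2h_\e^2/L\leq\e/4$. If you want to carry out the coupling inside this claim, you must add that event (and its estimate) to your $E_1$; under $E_1\cap A$ the Taylor bound $d(x_\ell^1,x_{\ell'}^1)\leq d(x_\ell,x_{\ell'})\leq d(x_\ell^1,x_{\ell'}^1)+h_\e^2/L$ then genuinely forces agreement of nearest neighbors, and your Markov step goes through (note also that you should apply Markov to the unconditional probability and intersect with $E_1\cap A$ rather than bounding a conditional probability by an unconditional expectation).
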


\begin{proof}
    Consider $T_{V_k}$, the subtree of $T_n$ induced by the $V_k$ first vertices. As we defined a vertex to be uncovered in $T_{V_k}$ if it is uncovered in $\pi(T_{V_k})$ by Definition~\ref{def:2d-strip-uncovered}, this follows directly from Lemma~\ref{lem:1d-uncovered-length} from $d=1$ setting. We have 
    \[ \E V_{k} = 1 + \frac{(\log 4)^{k-1} - 1}{\log 4 - 1 } \leq 3 \left( {4}/{\e}\right)^{3 \log \log 4},\] 
    and applying Markov's inequality gives $\P\left\{V_{k} \geq ({4}/{\e}) \E  V_{k}\right\} \leq \e/4$. Then, for $i \in [V_k]$, the probability that the edge created by the $i$-th vertex is short ($< L$) is at most $2L \cdot (i - 1)$, and union bounding gives that the probability that any of the first $V_k$ projected edges have length less than $L$ is at most $2 L \cdot V_k^2$. By definition of $L$ and $V_k$, this probability is at most $\e/4$.
\end{proof}

\noindent \textbf{A note on the height of the strip.} We set the height $h_\e$ of the strip so that $k V_k^2 h_\e^2 / L \leq \e/4$, a bound that allows us to compare the subgraph of the first $V_k$ vertices to a $1$-NN tree with the same points. Given the definitions of $k$, $V_k$ and $L$, the inequality is satisfied for $h_\e = O(\e^{5})$.
\medskip 

We now compare $\pi(T_n)$ to a 1-NN graph.

\begin{claim}\label{claim-6-strip}
   Consider the induced subgraph of the first $V_k$ vertices $\{x_i \equiv (x_i^1, x_i^2)\}_{i \in [V_k]}$ of a $2$-NN tree $T_n$ on the thin strip, and their projection $\pi(T_n)$. On the event from Claim~\ref{claim-4-strip}, with probability at least $1 - \e/4$,
   the subgraph of the first $k$ uncovered vertices in $\pi(T_n)$ is the same as the subgraph of first $k$ uncovered vertices in the 1-NN tree generated from $\{x_i^1\}_{i \in [V_k]}$.
\end{claim}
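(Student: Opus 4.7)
The plan is to show that, on the event from Claim~\ref{claim-4-strip}, with probability at least $1-\e/4$ the $2$D nearest neighbor of each $x_i$ with $i \in [V_k]$ coincides with the $1$D nearest neighbor of $x_i^1$ among $\{x_j^1\}_{j \in [i-1]}$. If this agreement holds, then $\pi(T_{V_k})$ and the $1$-NN tree built from the projections $\{x_i^1\}_{i \in [V_k]}$ are the same unlabeled embedded graph on $[0,1]$; their covered/uncovered classifications therefore match under Definition~\ref{def:2d-strip-uncovered}, and the first $k$ uncovered vertices together with the induced subgraph on them coincide.

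For the geometric comparison, fix $i \in [V_k]$, let $x_j$ denote the $2$D nearest neighbor of $x_i$ among $[i-1]$, and suppose that the $1$D nearest neighbor is some $x_{j'} \neq x_j$. Decomposing the $2$D torus distance as $d(x_i, x_\ell)^2 = d(x_i^1, x_\ell^1)^2 + d(x_i^2, x_\ell^2)^2$ and bounding each height term by $h_\e^2$, the inequality $d(x_i, x_j) \le d(x_i, x_{j'})$ yields
$$d(x_i^1, x_j^1)^2 - d(x_i^1, x_{j'}^1)^2 \le h_\e^2.$$
On the event from Claim~\ref{claim-4-strip}, the $1$D edge length satisfies $d(x_i^1, x_j^1) \ge L$, so $d(x_i^1, x_{j'}^1)$ must lie in an interval of length at most $h_\e^2/L$ immediately below $d(x_i^1, x_j^1)$.

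For the probabilistic step, the projections $x_i^1$ are i.i.d.\ uniform on $[0,1]$. For fixed $i$, a short order-statistics calculation shows that the probability that two among $\{x_\ell^1\}_{\ell \in [i-1]}$ have $1$D distances to $x_i^1$ within $h_\e^2/L$ of each other is at most $O(V_k h_\e^2/L)$; union bounding over $i \in [V_k]$ gives an overall bound of $O(V_k^2 h_\e^2/L)$ for the existence of any nearest neighbor discrepancy within $[V_k]$. With $V_k = \mathrm{poly}(1/\e)$ and $L = \e/(8V_k^2)$, the choice $h_\e = O(\e^5)$ makes this bound at most $\e/4$, as required.

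The main obstacle is striking the right balance between the height $h_\e$ and the union bound: the window $h_\e^2/L$ in which an errant $1$D nearest neighbor could live must be thin enough that paying for $O(V_k^2)$ candidate pairs still keeps the total below $\e/4$, and this is precisely what forces $h_\e$ down to $\mathrm{poly}(\e)$ rather than something only polylogarithmically small. Once the discrepancy event is excluded, every vertex in $[V_k]$ attaches to the same predecessor in $\pi(T_{V_k})$ as in the $1$-NN tree on projections, the two uncovered subgraphs coincide, and the claim follows.
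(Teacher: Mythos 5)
Your argument is correct and follows essentially the same route as the paper's proof. The core ideas match: reduce the claim to the event that the 2D and 1D nearest neighbors coincide for all relevant vertices, use the edge-length lower bound $L$ from Claim~\ref{claim-4-strip} to turn $d(x_i^1,x_j^1)^2 - d(x_i^1,x_{j'}^1)^2 \le h_\e^2$ into a separation of the 1D distances by $h_\e^2/L$, and then union bound. The only real difference is your bookkeeping in the union bound: the paper requires \emph{all} pairs of 1D distances to $x_i^1$ to be separated by $h_\e^2/L$, and only does so for the first $k$ uncovered vertices, arriving at $kV_k^2 h_\e^2/L$; you instead only need the two \emph{closest} 1D distances to $x_i^1$ to be separated (since if the discrepancy occurs, the 1D nearest neighbor and the 2D nearest neighbor are both within $h_\e^2/L$ of the minimum), but you run the bound over all $i\in[V_k]$, arriving at $V_k^2 h_\e^2/L$. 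Both factors are swallowed by $h_\e = O(\e^5)$, so neither buys anything concrete, but your restriction to all $i\in[V_k]$ rather than to a data-dependent set of ``first $k$ uncovered'' indices is slightly cleaner from a measurability standpoint. One small clarification would help: the phrase ``two among $\{x_\ell^1\}$ have 1D distances within $h_\e^2/L$'' reads literally as a pairwise event over all $\binom{i-1}{2}$ pairs, which costs $O(V_k^2 h_\e^2/L)$ per $i$, not $O(V_k h_\e^2/L)$; you should say explicitly that it suffices to consider the gap between the two smallest order statistics, which is exactly what makes your per-$i$ bound $O(V_k h_\e^2/L)$ and the total $O(V_k^2 h_\e^2/L)$.
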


\begin{proof}
    To prove this claim, we must show that with probability at least $1-\e/4$, for $i \in [V_k]$ that is one of the first $k$ uncovered vertices, if $(x_i^1, x_i^2)$ connects to $(x_{p_i}^1, x_{p_i}^2)$ with ${p_i} < i$ in $T_n$ then $d(x_i^1, x_{p_i}^1) = \min_{\ell<i} d(x_i^1, x_\ell^1)$, i.e., $x_i^1$ connects to $x_{p_i}^1$ in the 1-NN tree consisting of points $\{x_i\}_{i \in [V_k]}$. To do so, let $i \in [V_k]$ and suppose that $i$ connects to $p_i < i$ such that $(x_i, x_{p_i}) \in E(T_n)$, i.e., $d(x_i,x_{p_i}) = \min_{\ell < i} d(x_i,x_\ell)$. The required condition holds if for any $i \in [V_k]$ that is one of the first $k$ uncovered vertices, 
there exists no $\ell < i$, $\ell \neq p_i$ such that $d(x_i^1,x_\ell^1) < d(x_i^1,x_{p_i}^1)$.

Consider the event
\[A = \left\{\fa i \in \left\{\text{first $k$ uncovered vertices}\right\},\ \fa \ell,\ell' < i,\ |d(x_i^1,x_\ell^1) - d(x_i^1,x_{\ell'}^1) | > h_\e^2/L \right\},\] i.e., for each of the first $k$ uncovered vertices, all of its $x$-distances to other vertices are separated by at least $h_\e^2/L$. It satisfies $\P\{A^c\} \leq k V_k^2 h_\e^2 / L \leq \e / 4$ for sufficiently small $\e > 0$, where for the second inequality we used the bound on $V_k$ from Claim~\ref{claim-4-strip}. We claim that on the event $A$, if $x_i$ is closest to $x_{p_i}$ then $x_i^1$ is closest to $x_{p_i}^1$. Indeed, suppose there does exists $\ell < i,\ \ell \neq p_i$ such that $d(x_i^1,x_\ell^1) < d(x_i^1,x_{p_i}^1)$. Then, conditioned on event $A$, we have 
\begin{equation}\label{eq:bound-from-A}
    d(x_i^1, x_{p_i}^1) - d(x_i^1, x_\ell^1) > h_\e^2 / L.
\end{equation}

Now, we can use that for any $\ell, \ell' \in [V_k]^2$, 
$d(x_\ell^1, x_{\ell'}^1) \le d(x_\ell, x_{\ell'}) \le ( d(x_\ell^1, x_{\ell'}^1)^2 + h_\e^2)^{1/2}$. As we condition on the event from Claim~\ref{claim-4-strip}, we have $d(x_\ell^1, x_{\ell'}^1) > L$ and can bound 
$( d(x_\ell^1, x_{\ell'}^1)^2 + h_\e^2)^{1/2} \leq d(x_\ell^1, x_{\ell'}^1) + h_\e^2/L$, and so
\begin{equation}\label{eq:taylor-bound}
d(x_\ell^1, x_{\ell'}^1) \le d(x_\ell, x_{\ell'}) \le  d(x_\ell^1, x_{\ell'}^1) + h_\e^2/L.
\end{equation}
Using \eqref{eq:taylor-bound} for $d(x_i,x_{p_i})$ and $d(x_i,x_\ell)$ in \eqref{eq:bound-from-A}, we obtain that $d(x_i,x_\ell) < d(x_i,x_{p_i})$, contradicting $(x_i,x_{p_i}) \in E(T_n)$. 
\end{proof}

We now prove that the upper bound on the confidence set size from the $d=1$ setting holds in the setting of the thin two-dimensional strip.

\begin{proof}[Proof of the set size upper bound in Corollary \ref{cor:2d-strip}]
    \textsc{Confidence set size and correctness.} With probability at least  $1 - 3\e/4$, the events of Claim~\ref{claim-4-strip} and~\ref{claim-6-strip} both occur. On these events, for $k$ satisfying $k \log k = 3 \log(4/\e)$, the subgraph of the first $V_k$ vertices in $\pi(T_n)$ is the same as the subgraph of the first $V_k$ vertices in the 1-NN tree generated from $\{x_i^1\}_{i \in [V_k]}$. We can therefore use properties of the uncovered subgraph from the $d=1$ setting.

    As in \eqref{eq:L_k-upper-tail} when $d=1$, using Lemma~\ref{lem:1d-uncovered-tails}, the remaining uncovered space $L_{k}$ in the one-dimensional torus after $k$ uncovered vertices of $\pi(T_n)$ are placed satisfies 
    \[\P\left\{ L_{k} \geq 1/(5k) \right\} \leq \e/8,\] 
    i.e., with probability at least $1 - \e/8$, all later uncovered edges in $\pi(T_n)$ have length less than $1/(5k)$. If this occurs, the confidence set is a subset of the first $k$ uncovered vertices plus $k$ uncovered vertices to either side of this path, which is at most $3k = O\left(\frac{\log(1/\e)}{\log\log(1/\e)} \right)$ vertices.
    
    Additionally, using the same analysis as in the $d=1$ case, with probability at least $1 - \e/8$, one of the first $k$ edges in $\pi(T_n)$ is of length at least $\ell = 1/(5k)$.

    Therefore, with probability at least $1 - \e$, the confidence set consists of $O\left(\frac{\log(1/\e)}{\log\log(1/\e)} \right)$ vertices, and it includes the root.

    \textsc{Runtime.} It takes $O(n)$ time to project $T_n^{\circ}$ to $\pi(T_n^{\circ})$. After projecting the unlabeled tree, the one-dimensional root finding algorithm is run, which takes time $O(n^2 + \log^2(1/\e))$.
\end{proof}

\subsection{Lower bound on the confidence set size}
We consider a similar family $\F$ of trees as in the $d=1$ case. 
First, without loss of generality, let $x_1 = (0,0)$. Next, break the rest of strip into $2 K(\e) + 6$ evenly spaced and sized intervals $\{I_i\}_{1 \leq i \leq 2K(\e)+6}$ defined as follows: for each $i \in [2K(\e)+6]$, $I_i = \{(x,y): (i-3/2)/(2K(\e)+6) \leq x \leq (i-1/2)/(2K(\e)+6)\}$ has length $1/({2 K(\e) + 6})$ and is centered around $(i-1)/(2K(\e)+6)$. For every $t \in \{2, 3, \dots, K(\e) + 2\}$, let $x_t \in I_{2 t - 1}$. Note that the intervals $I_{2 K(\e) + 4}, I_{2 K(\e) + 5}, I_{2 K(\e) + 6}$ all do not have vertices placed in them. 

\begin{proof}[Proof of the set size lower bound in Corollary~\ref{cor:2d-strip}]
We begin by observing that for all $t \in [K(\e) + 2]$, $x_{t + 1}$ connects to $x_t$ as long as the height $h$ of the strip satisfies
\begin{equation}\label{equation:lower-bound-strip-height}
    h_\e \leq {1}/({4 K(\e) + 12}).
\end{equation}
That is, the height must be at least half of the length of one of the intervals. When this is the case, $x_{t + 1}$ is closer to $x_t$ than any other $x_{t'}$ for $t' < t$. To see this, consider the triangle formed by the vertices $(x_{t'}, x_{t}, x_{t+1})$. Let $d(x_i, x_j)$ be the Euclidean distance between any vertices $x_i$ and $x_j$. Then, $d(x_{t'}, x_{t + 1})$ must be larger than both $d(x_{t}, x_{t + 1})$ and $d(x_{t'}, x_{t})$ because it is opposite the largest angle in the triangle, because the other two angles in the triangle must be less than sixty degrees due to the height being less than half of the length of any of the intervals. Additionally, note that the distance between $x_{t + 1}$ and $x_t$ is less than ${3}/({2 (K(\e) + 5)})$ and the distance between $x_{t + 1}$ and $x_1$ is at least $6/({2 (K(\e) + 5)})$, so $x_{t + 1}$ cannot connect to any earlier vertex by ``wrapping around'' the strip. Therefore, for any tree in $\F$, for all $t > 1$, $x_{t+1}$ connects to $x_t$. 

Additionally, using the same argument as in the $d=1$ case (claim \ref{claim:mle-root-1d}, the posterior probability of the root vertex $x_1$ is lower than that of $x_t$ for all $t \in \{2, 3, \dots, K(\e) + 1\}$, therefore any confidence set returning $K(\e)$ vertices (for $K(\e)$ corresponding to the lower bound configuration) fails to return the root. This family of tree configurations occurs with probability at least $\e$ when
$$ 2 \left( 2 K(\e) + 6\right)^{-K(\e) - 1} \geq \e.$$

Therefore, when $h_\e \leq ({4 \log(2/\e) + 12})^{-1}$ and $(K(\e) + 1) \log \left( 2 K(\e) + 6\right) \leq \log(2/\e)$, any algorithm returning a set of size $K(\e)$ will fail to return the root with sufficiently high probability. 
\end{proof}

\section{Embedded root finding in higher dimensions}\label{sec:higher-dims}

We now consider $d \geq 2$, proving the bounds in Theorem~\ref{thm:higher-dims}. First, note that similarly to $d = 1$, with high probability there exists a long edge that arrives early on in the process. Therefore, long edges provide useful information for finding the root. Additionally, a notion of \textit{uncovered} vertices can be defined here. Similarly to how in $d=1$ an edge blocks off a region of space within which no vertex can be the root, in $d=2$ we also have such ``covered'' regions induced by edges. However, the definition is more involved, since the area can in some sense expand as more vertices connect to it. 

\begin{figure}[hbtp]
    \centering
    \includegraphics[width=16cm]{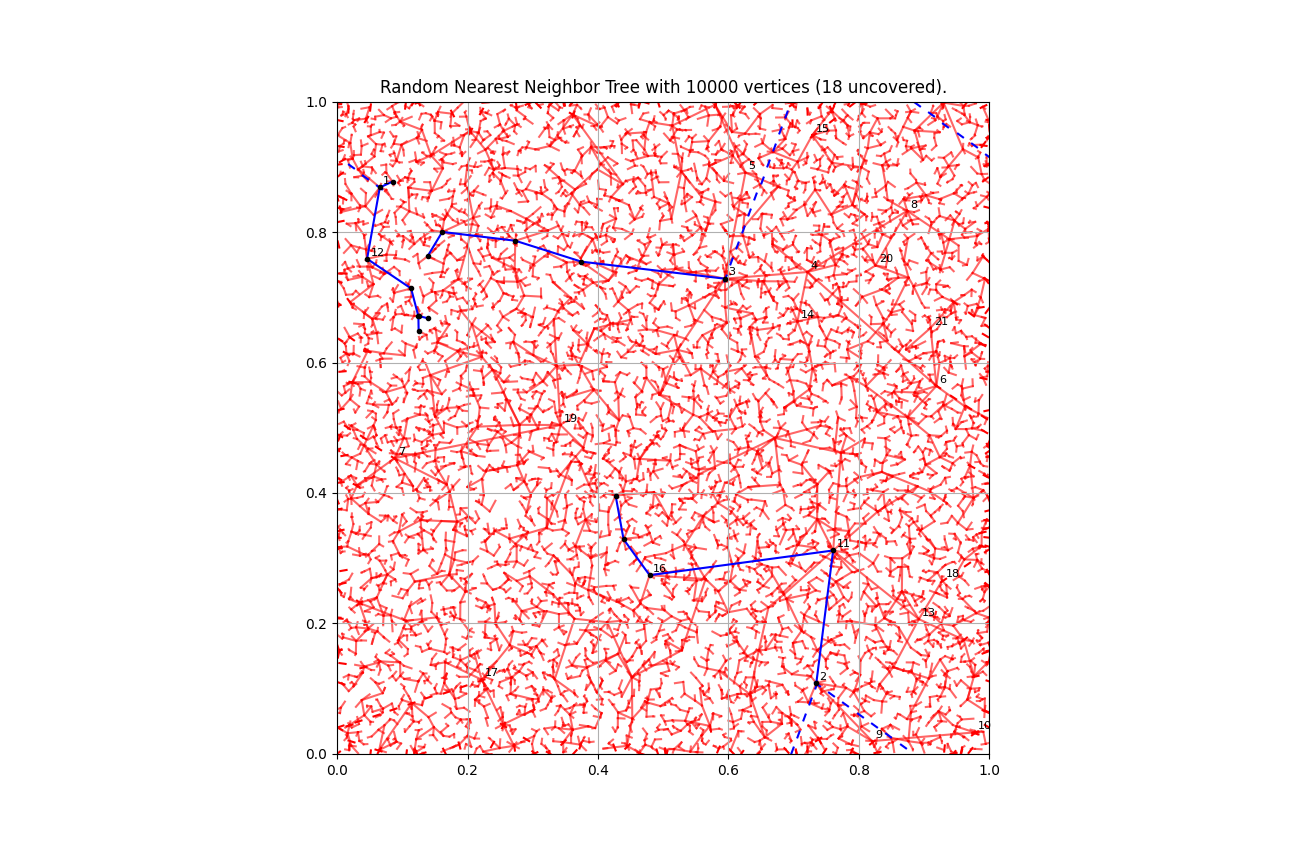}
    \caption{Simulation of a 2-NN tree of size 10,000. Uncovered edges are labeled as blue and covered edges are labeled as red. The first 20 vertices added are labeled by their time of arrival. Edges that wrap around the torus are dotted. The simulations indicate that the number of uncovered vertices in the tree grows slowly with the number of vertices in the tree.
    }
    \label{fig:simulations-uncovered-vertices}
\end{figure}

In this section, we first give a formal definition of uncovered and covered vertices in $d \geq 2$ and present a structural lemma about the maximum outdegree in the subtree of uncovered vertices. Then, we present an algorithm (Algorithm \ref{alg:higher-dimensional}) that proves the upper bound in Theorem~\ref{thm:higher-dims}. This algorithm first does a processing pass to identify uncovered vertices, then uses long edges to build the confidence set $H(\e,n)$. More precisely, the confidence set consists of uncovered vertices within a certain graph distance of a long edge. Finally, we present a lower-bound construction. Similarly to the lower-bound constructions for the one-dimensional torus and the thin two-dimensional strip, the lower-bound construction for $d \geq 2$ is a path. We can argue that every permutation of the labels yields a valid nearest neighbor tree. We then argue that the root vertex has a lower posterior probability than all but the last vertex added.

Following, for any $\ell < 1/2$, we let $C(d)$ be the constant that depends on $d$ such that the $d$-dimensional ball of radius $\ell$ has volume $C(d) \cdot \ell^d$.

\subsection{Structural information}

In $d \geq 2$, we can extend the notion of ``uncovered" and ``covered" vertices and edges.

\begin{defn}[Uncovered vertex and edge]\label{def:uncovered-general}
For each edge $(p_1, p_2)$ in the graph, if a vertex $x$ with parent $p_1$ lands in the region such that $d(x, p_1) < d(p_1, p_2) \ad d(x, p_2) < d(p_1, p_2)$, a direction can be assigned to the edge $(x,p_1)$. Particularly, orient the edge from $x$ to $p_1$ to indicate that the arrival time of $p_1$ must be earlier than that of $x$. We can similarly orient all of the vertices in the subtree of $x$. We consider $x$ and its subtree \textit{covered}. Denote by $\Cs_n = [n] \setminus \U_n$ the set of covered vertices. 

A vertex is called \textit{uncovered} if it is not a covered vertex. An edge is called \textit{uncovered} if both of its endpoints are uncovered vertices.
\end{defn}

We will consider an algorithm that first does a pass to identify covered vertices and direct their associated edges, then uses long edges to build a confidence set $H(\e,n)$. This algorithm will rely on several crucial structural components of the random nearest neighbor graph. First, the graph of uncovered vertices has bounded degree. Second, with high probability, one of the first $k \approx \log(1/\e)$ edges added is long. Third, with high probability, the earlier vertex adjacent to this edge is a graph distance at most $7 \log(k)$ away from the root.

We begin with a structural lemma that bounds the degree of any vertex in the induced subgraph of the uncovered vertices.

\begin{lem}\label{lem:uncovered-structural} 
    There exists some $a > 0$ such that the following holds for all $d \in \N$.
    The induced subgraph of the uncovered vertices $\U_n \subset [n]$ forms a tree of maximum degree $a^{d}$. 
\end{lem}
\begin{proof}

    First, note that the induced subgraph of the uncovered vertices is connected, i.e. it forms a tree. This follows from the definition of covered and uncovered: if a vertex is uncovered, there must exist a path from this vertex to the root along only uncovered edges. This applies to all uncovered vertices, yielding connectivity.

    Next, let $\deg_{\U_n}(v)$ denote the degree of $v \in \U_n$ in the induced subgraph of uncovered vertices. We seek to upper-bound the number of uncovered vertices adjacent to $v$. Consider any pair of uncovered edges $e = (u,v)$ and $e'=(u',v)$, where the arrival time of $u$ is later than that of $u'$ or $v$. For $u$ to connect to $v$ rather than $u'$, we must have $d(u, v) \leq d(u, u')$. For both $u$ and $u'$ to be uncovered, $u$ cannot lie in the ``forbidden" region of $e'$, therefore we must have $d(u', v) \leq d(u, u')$ or $d(u', v) \leq d(u, v)$.

    Therefore, the three points $v$, $u$, and $u'$ form a triangle with longest side length $d(u, u')$. The angle between $e$ and $e'$ is opposite the longest edge, and thus must be at least $\pi/3$. Therefore, the uncovered degree of any $v \in \U_n$ is upper bounded by the maximum number of vectors with pairwise angle at least $\pi/3$:
    \begin{align}
        \deg_{\mathcal{U}_n}(v) &\leq \max\left\{k : X_1, \dots, X_k \in [-1,1]^d: \frac{\langle X_i, X_j \rangle}{\normd{X_i} \cdot \normd{X_j}} \leq \cos(\pi/3) ~~ \forall i, j \in [k]\right\} \nonumber \\
        &= \max\left\{k : Z_1, \dots, Z_n \in S^{d-1} : \langle Z_i, Z_j \rangle \leq \cos(\pi/3) ~~ \forall i, j \in [k]\right\}, 
    \end{align}
    where $S^{d-1}$ is the unit ball in dimension $d$. For all $d \geq 1$, the number of unit vectors of maximum inner product $\cos(\pi/3)$ is less than $a^d$ for some constant $a > 0$, concluding the proof of the lemma.
\end{proof}

\begin{rem}
Note that in $d=1$, we have $\deg_{\U_n}(v) \leq 2$: the uncovered tree is in fact a path. In $d=2$, $\deg_{\U_n}(v) \leq 5$ almost surely (the event of any pairwise angle being exactly $\pi/3$ is measure 0).
\end{rem}

\subsection{Upper bound on the confidence set size}

We are now ready to describe the algorithm for returning a confidence set satisfying property \eqref{eq:root-finding-condition}. The algorithm consists of the following two steps.

\begin{alg}\label{alg:higher-dimensional} Let $d \geq 2$ and $0 < \e < 1/2$. \\ 
\noindent\textbf{Step 1: Assigning edge directions.} For an edge $e = (p_1, p_2)$, denote its {\sl forbidden region} to be $F(e) \ceq \{x \in [0,1]^d : d(x, p_1) < d(p_1, p_2) \ad d(x, p_2) < d(p_1, p_2)\} \subset \T^d$. For $i \in \{1,2\}$, for each neighbor $x$ of $x_i$, if $x \in F(e)$ then $x$ is covered: direct the edge $(x,x_i)$ to point towards $x_i$, and accordingly direct each other vertex $y \neq x$ in the subtree $T_x$ of $x$ (which can be determined since $(x,x_i)$ is directed); assign $y \in \Cs_n$ for each $y \in T_x$. Repeat for each remaining uncovered edge until all edges are processed, and all vertices are labeled either covered ($\in \Cs_n$) or uncovered ($\in \U_n$). 

\smallskip 
\noindent\textbf{Step 2: Building the confidence set.} Consider an edge length cutoff $\ell$ and expansion radius $k$. Let $a > 0$ be the constant from Lemma \ref{lem:uncovered-structural}. Let $\gamma = ( d \log(a) \cdot \log \log(1/\e) )^{-1}$
and set
\begin{equation}\label{eq:k-ell-settings}
    k = \gamma \log(1/\e) \ \ad \ \ell = \left({C(d) \cdot \gamma \cdot e^{1/\gamma} \cdot \log(1/\e)}\right)^{-1/d},
\end{equation}
where $C(d) $ is the volume of the unit ball in $\T^d$. Return the set of uncovered vertices that are within graph distance $k$ of either endpoint of an edge of length greater than $\ell$.
\end{alg}

\begin{SCfigure}[.7][hbtp]
    \centering
    \includegraphics[width=.53\linewidth]{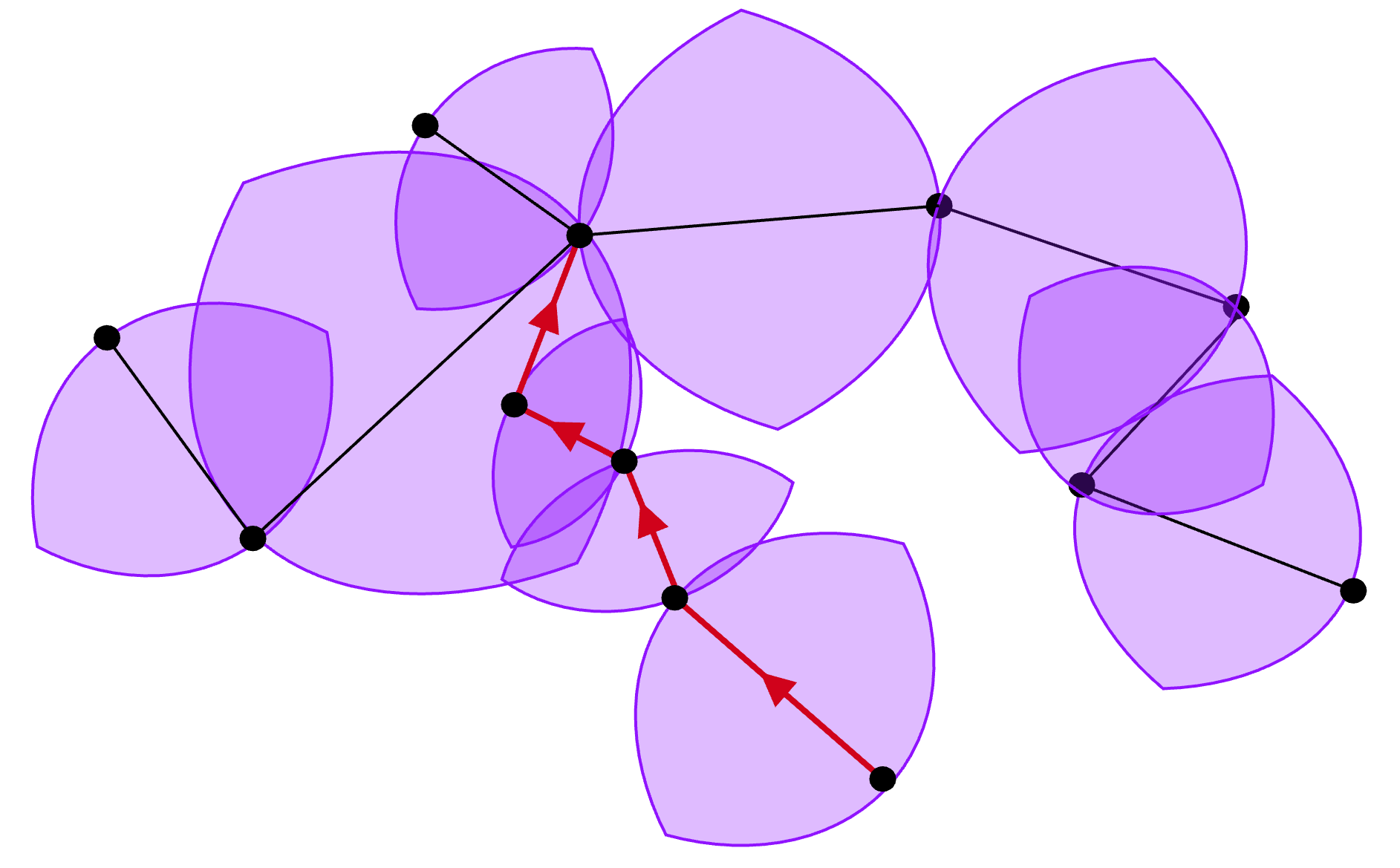}
    \caption{In this example of a $d=2$ configuration, the forbidden regions of each edge have been highlighted by the purple regions. Additionally, for vertices that are covered -- by either falling in a forbidden region of another edge or having an ancestor that is covered -- the corresponding edges have been colored red and directed accordingly. Note that the concept of ancestors in the subtree of covered vertices is well-defined because the earliest covered edge will be in the forbidden region of another edge and directed accordingly, which will induce a direction for all edges in each connected component of the covered subtree.}
    \label{fig:2d-uncovered-vertices}
\end{SCfigure}

\begin{rem}
Note that if an edge is covered, it is directed, and therefore there is a fixed direction to walk until either an uncovered edge is hit, or $k$ steps are taken. On the other hand, if we are at an uncovered edge, each endpoint can be adjacent to at most $a^d$ uncovered edges, and the exploration therefore grows in at most $a^d$ directions per uncovered step (see Lemma~\ref{lem:uncovered-structural}). 
\end{rem}

We prove that the confidence set size required by Algorithm~\ref{alg:higher-dimensional} satisfies the requirements on the confidence size specified by the upper bound in Theorem~\ref{thm:higher-dims}, and additionally that Algorithm~\ref{alg:higher-dimensional} is indeed a polynomial-time algorithm.
\begin{lem}
    First, for any fixed $d \geq 2$, there exists an $\e_d > 0$ such that for all $\e < \e_d$, the set $H(\e, n)$ returned by Algorithm \ref{alg:higher-dimensional} satisfies:
\begin{equation} \label{eq:H-bound-higher-dimension}
    |H(\e, n)| \leq \exp\bpar{\frac{\log(1/\e)}{\log\log(1/\e)} + d \log\log(1/\e)}.
\end{equation}
Second, Algorithm \ref{alg:higher-dimensional} returns a confidence set containing the root with probability at least $1 - \e$. Third, the algorithm's runtime is polynomial in $n$ and $1/\e$.
\end{lem}

\begin{proof}
    \textsc{Correctness.} First, observe that if any of the first $k$ edges are long (length $\geq \ell$), then the algorithm successfully includes the root vertex in $H(\e,n)$. The error probability is therefore bounded above by the probability that all first $k$ edges are short:
\begin{equation}
    \P\{1 \not \in H(\e,n)\} \leq \prod_{i = 1}^k \bpar{ i \cdot C(d) \ell^d } \leq (k C(d)  \ell^d)^k.
\end{equation}
With $k$ and $\ell$ set according to \eqref{eq:k-ell-settings}, we have 
\[
    \P\{1 \not \in H(\e,n)\} 
    \leq 
    \left( \frac{ C(d) \gamma \log(1/\e) }{C(d) \cdot \gamma \cdot e^{1/\gamma} \cdot \log(1/\e)}\right)^{\gamma \log(1/\e)}
    = e^{-\log(1/\e)} = \e .
\]
    \textsc{confidence set size.} Given this setting of $k$ and $\ell$, we now analyze the size $|H(\e, n)|$ of the confidence set returned by the algorithm. For each endpoint of a long edge, the algorithm adds up to $a^{dk}$ other vertices to the confidence set, because by Lemma \ref{lem:uncovered-structural}, the induced subgraph of uncovered vertices has bounded degree. Additionally, as noted in the proof of Lemma~\ref{lem:count-long-edges}, the number of long edges is at most $(2/\ell)^d$. Therefore, the size of the confidence set returned is at most
\begin{equation}
    |H(\e, n)| \leq (a)^{kd} 2^{d+1}/\ell^d = \bpar{1/\e}^{\gamma d \log(a)} \cdot 2^{d+1} \cdot C(d) \cdot \gamma \cdot e^{1/\gamma} \cdot \log(1/\e) ,
\end{equation}
and taking $\gamma = (d \log(a) \cdot \log \log (1/\e))^{-1}$ yields
\[
    |H(\e,n)| \leq \exp\bpar{\frac{\log(1/\e)}{\log\log(1/\e)}+ (d\log(a) + 1 - o(1)) \log \log (1/\e) } \frac{2^{d+1} C(d)}{d\log(a)}.
\]
There exists a universal constant $c $ such that for any fixed $d$, $\frac{2^{d+1} C(d)}{d\log(a)} \leq c$. We therefore achieve the expression in \eqref{eq:H-bound-higher-dimension}.

\textsc{Runtime.} In the first step of the algorithm, we mark all vertices as covered or uncovered. This relies on searching among neighbors and updating labels of descendants, which uses $O(n^2)$ time.

In the second step of the algorithm, the confidence set is built by having each vertex adjacent to an edge of length greater than $\ell$ return uncovered vertices within a radius $k$, where $\ell$ and $k$ are defined with respect to $\e$ and $d$ (as in \eqref{eq:k-ell-settings}). For any long edge, the number of uncovered vertices added to the confidence set as a result of being a graph distance of $\leq k$ away from this long edge is at most $a^{dk}$ which, by the setting of $k$, is $\left({1}/{\e} \right)^{\frac{1}{\log \log(1/\e)}}$. Therefore, Step 2 takes at most $n \cdot \left({1}/{\e} \right)^{\frac{1}{\log \log(1/\e)}}$ time.
\end{proof}

\subsection{Lower bound on the confidence set size}

We now consider lower bounds on the confidence set size for $d \ge 2$. As in $d=1$ setting in Section~\ref{subsec:lowerbounds-1d}, we construct a family $\F$ of tree configurations of size $K(\e)+2$ in which the root always has a lower posterior probability than $K(\e)$ other vertices in the tree. We prove the following bound, which gives the lower bound on the confidence set size in Theorem \ref{thm:higher-dims}.

\begin{lem}\label{lem:lowerbound-higherdimension}
    Any procedure for finding the root (i.e., that satisfies \eqref{eq:root-finding-condition}) for the random nearest neighbor tree model over $\T^d$ must satisfy
    \begin{equation}\label{eq:d-dimension-lower-bound}
        (K(\e) + 1) \log \left( \frac{4 K(\e) + 4}{d^{1/d}}\right) > \frac{\log(1/\e)}{d},
    \end{equation}
    regardless of its efficiency.
\end{lem}

We now describe the family of trees we construct to prove the lemma.
\smallskip

\noindent\textbf{The higher dimensional lower bound construction.} We first define cubes $\mathcal{C}(x, i, r)$ as follows. Given a point $x \in \T^d$ and $i \in [d]$, let $\mathcal{C}(x, i, r)$ be the cube consisting of points $y$ such that $x_i + r < y_i < x_i + 2r$ and for all $j \neq i \in [d]$, $x_j < y_j < x_j + r$.

Consider the following procedure for generating a tree embedded in $\T^d$. First, choose a string $\left(i_1, i_2, \dots, i_{K(\e) +1}\right) \in [d]^{K(\e) + 1}$ uniformly at random. Given this string, we now place points $(x_1, \dots, x_{K(\e+2)})$ as follows. Let $x_1 \in \T^d$ arbitrarily. Then, for $t = 2,\dots ,K(\e) + 2$, let $x_t \in \mathcal{C}(x_{t-1}, i_{t-1}, \frac{1}{4 K(\e) + 4})$.

Let $\mathcal{F}$ be the family of trees that can be generated by this procedure. We begin with the following claim about $\mathcal{F}$.

\begin{SCfigure}[.9][hbtp]
    \centering
    \hspace{.25cm}
    \includegraphics[width=.4\linewidth]{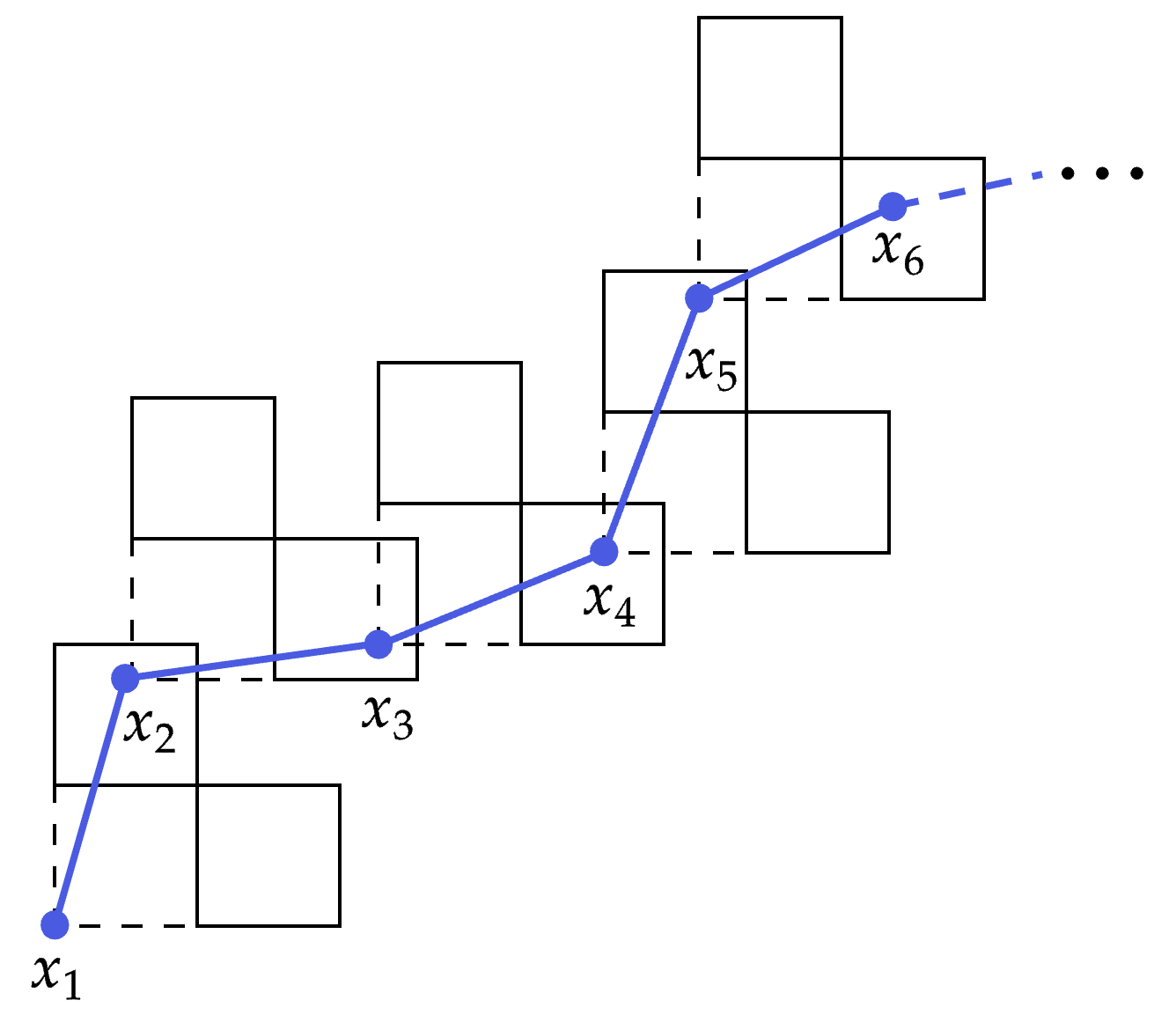}
    \caption{An example for $d = 2$ of a tree in the family of trees considered in the initial higher-dimensional lower bound. $x_1$ may fall anywhere. The remaining vertices $\{x_t\}_{t = 2}^{K(\e) + 2}$ fall sequentially in cubes $\mathcal{C}(x_{k-1}, i_{k-1}, \frac{1}{4 K(\e) + 4})$ for a randomly chosen string of directions $\left(i_1, i_2, \dots, i_{K(\e) +1}\right) \in [d]^{K(\e) + 1}$. In this visual, for each point $x_t$ added, the potential cubes $\mathcal{C}(x_t, 1, \frac{1}{4 K(\e) + 4})$ and $\mathcal{C}(x_t, 2, \frac{1}{4 K(\e) + 4})$ that the next point $x_{t+1}$ can fall into are displayed (the non-dashed squares). 
    }
    \label{fig:2d-lowerbound}
\end{SCfigure}

\begin{claim}
    For all trees in $\mathcal{F}$, for all $t \in \{1, 2, \dots, K(\e) + 1\}$, $x_{t+1}$ connects to $x_t$. 
\end{claim}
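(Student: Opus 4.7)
The plan is to exploit strict coordinate-wise monotonicity of the path $x_1, x_2, \ldots, x_{K(\e)+2}$, so that for every previous vertex $x_s$ with $s < t$ the displacement $x_{t+1} - x_s$ strictly dominates $x_{t+1} - x_t$ coordinate by coordinate, which immediately yields a strict distance comparison.

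Setting $r = 1/(4K(\e)+4)$ and $w_\ell := x_{\ell+1} - x_\ell$, the definition of $\mathcal{C}(x,i,r)$ gives that every coordinate of every $w_\ell$ is strictly positive: $(w_\ell)_{i_\ell} \in (r, 2r)$ and $(w_\ell)_j \in (0, r)$ for $j \neq i_\ell$. Consequently, for any $1 \leq s < t \leq K(\e)+1$ and any coordinate $j \in [d]$,
\[
(x_{t+1} - x_s)_j \;=\; (w_t)_j + \sum_{\ell=s}^{t-1} (w_\ell)_j \;>\; (w_t)_j \;=\; (x_{t+1} - x_t)_j \;>\; 0.
\]
Squaring each such coordinate and summing over $j \in [d]$ yields $\|x_{t+1} - x_s\|_2^2 > \|x_{t+1} - x_t\|_2^2$.

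The one subtlety is torus wrap-around: in principle Euclidean distance and torus distance could disagree. However, for any $s \leq t \leq K(\e)+1$, each coordinate of $x_{t+1} - x_s$ has magnitude at most $2r(t+1-s) \leq 2r(K(\e)+1) = 1/2$, strictly, so the torus distance in every coordinate coincides with the ambient Euclidean one. The Euclidean comparison therefore lifts to a strict inequality of torus distances, and $x_t$ is the unique nearest previously-placed vertex to $x_{t+1}$, proving the claim.

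The main obstacle to watch for is resisting a naive scalar argument. A Cauchy--Schwarz bound on direction counts yields $\|x_{t+1} - x_s\|_2 \geq r(t+1-s)/\sqrt{d}$, and the cube diameter yields $\|x_{t+1} - x_t\|_2 \leq r\sqrt{d+3}$; but these already fail to separate at $s = t-1$ once $d \geq 2$, since $2r/\sqrt{d} < r\sqrt{d+3}$. Progress requires abandoning the scalar $\ell^2$ framing in favor of coordinate-wise domination, where the strict positivity of every $(w_\ell)_j$ turns the path's monotone growth into term-by-term strict inequality of squared coordinates.
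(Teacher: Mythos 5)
Your proof is correct and follows the same strategy as the paper's: both decompose the path as $x_{t+1} = x_t + w_t$, observe that every coordinate of each $w_\ell$ is strictly positive so that cumulative displacements $\sum w_\ell$ strictly dominate $w_t$ coordinate-by-coordinate, and then note that each coordinate of any $x_{t+1} - x_s$ stays strictly below $1/2$ so that torus distance coincides with ambient Euclidean distance. Your write-up makes the coordinate-wise domination and the wrap-around check slightly more explicit, but it is essentially the paper's argument.
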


\begin{proof} 
    By construction, for any $x_{t+1}$ for $t \in \{1, 2, \dots, K(\e) + 1\}$, $x_{t+1} = x_t + w_t$, where $w_t$ is a vector with only positive entries. Specifically, the $i_t$-th entry of $w_t$ satisfies $\frac{1}{4 K(\e) + 4} < w_t(i_t) < \frac{1}{2 K(\e) + 2}$ and the rest of the entries $j$ satisfy $0 < w_t(j) < \frac{1}{4 K(\e) + 4}$. Inductively, for $t \geq 3$, $x_{t+1} = x_{t - i} + \sum_{k = t - i}^t w_{k}$.

    We want to argue that $x_{t + 1}$ is closer $x_t$ than any $x_{t - i}$. Observe that:
    $$d(x_{t + 1}, x_t) = d(0, w_t) ~~ \text{ and } ~~ d(x_{t + 1}, x_{t-i}) = d\left(0, \sum_{k = t - i}^t w_{k}\right).$$
    Since for any $t$, any $i \leq t - 1$, and any coordinate $j \in [d]$, we have $\sum_{k = t - i}^t w_{k}(j) < \frac{K(\e) + 1}{2 K(\e) + 2 } = \frac{1}{2}$, we can use the definition of the torus Euclidean distance \eqref{equation:torus-euclidean-distance} to find that 
    \[d(x_{t + 1}, x_t) = \normd{w_t}_2 ~~ \text{ and } d(x_{t + 1}, x_{t-i}) > \normd{w_t}_2,\]
    where $\normd{\cdot }_2$ denotes the $\ell_2$ norm of a vector. Therefore, by definition of the random nearest neighbor process, $x_{t+1}$ will connect to $x_t$.
\end{proof}

\begin{claim}
   The family $\mathcal{F}$ of tree configurations occurs with probability at least $\e$ when 
   \begin{equation}\label{initial-higher-dimension-lower-bound-confidence set-size}
       d^{K(\e) + 1} \cdot \left(4K(\e) + 4\right)^{-d (K(\e) + 1)} \geq \e.
   \end{equation}
\end{claim}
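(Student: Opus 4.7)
The plan is to reduce the claim to a direct volume computation, summing over the $d^{K(\e)+1}$ possible direction strings. First I would fix a string $(i_1,\ldots,i_{K(\e)+1}) \in [d]^{K(\e)+1}$ and compute the probability that the first $K(\e)+2$ vertices of the random NN process produce a configuration with $x_{t+1} \in \mathcal{C}(x_{t}, i_{t}, r)$ for each $t \in \{1, \ldots, K(\e)+1\}$, where $r = 1/(4K(\e)+4)$. Since vertices are placed i.i.d.\ uniformly on $\T^d$, each cube $\mathcal{C}(x, i, r)$ has Lebesgue measure exactly $r^d$ regardless of the position of $x$ (the torus has no boundary, so no region is cut off). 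Conditioning sequentially on $x_1,\ldots,x_t$, the placement of $x_{t+1}$ is independent and uniform, so the probability of this specific cube sequence factors as $r^{d(K(\e)+1)} = (4K(\e)+4)^{-d(K(\e)+1)}$, independent of the (arbitrary) position of $x_1$.

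Next I would argue that the events indexed by distinct direction strings are pairwise disjoint. From the definition of $\mathcal{C}(x, i, r)$, for $i \neq i'$ and any fixed $x$, membership in $\mathcal{C}(x, i, r)$ requires coordinate $i$ of $y$ to lie in $(x_i + r, x_i + 2r)$, while membership in $\mathcal{C}(x, i', r)$ requires coordinate $i$ of $y$ to lie in $(x_i, x_i + r)$; these two intervals do not intersect. Hence, given a realization of $(x_1, \ldots, x_{K(\e)+2})$, a compatible string $(i_1, \ldots, i_{K(\e)+1})$ is unique whenever it exists, so summing over all $d^{K(\e)+1}$ strings yields
\[
\P\{T_{K(\e)+2} \in \mathcal{F}\} \geq d^{K(\e)+1} \cdot (4K(\e)+4)^{-d(K(\e)+1)}.
\]
Invoking the preceding claim, which ensures that each such geometric configuration produces exactly the intended tree structure (with $x_{t+1}$ connected to $x_t$ for every $t$, so the edge set is determined), the hypothesis that this lower bound is at least $\e$ gives the claim.

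There is no substantial obstacle here: the argument reduces entirely to (i) the volume of each cube being exactly $r^d$ regardless of center, via the translation-invariance of the torus, (ii) disjointness of cubes in distinct coordinate directions emanating from the same base point, and (iii) the fact established in the preceding claim that once the geometric configuration lies in the prescribed cubes the combinatorial tree in $\mathcal{F}$ is determined. The only point that requires any care is ensuring that the direction string can be read off uniquely from the point sequence, which is precisely the disjointness in (ii).
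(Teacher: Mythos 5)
Your proposal is correct and follows essentially the same route as the paper: fix a direction string, observe each of the $K(\e)+1$ later vertices must land in a prescribed cube of volume $r^d = (4K(\e)+4)^{-d}$, and multiply by the number $d^{K(\e)+1}$ of strings. The one place where you are a bit more careful than the paper is the explicit disjointness argument for distinct strings (the conflicting coordinate intervals $(x_i+r, x_i+2r)$ versus $(x_i, x_i+r)$), which is indeed the fact needed to make the multiplication by $d^{K(\e)+1}$ legitimate as a probability sum rather than an overcount; the paper leaves this implicit. This is a filled-in gap rather than a genuinely different approach.
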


\begin{proof}
    In the family $\mathcal{F}$ of tree configurations considered, we impose conditions on the positions of all but the first vertex. After specifying the
    string $\left(i_1, i_2, \dots, i_{K(\e) +1}\right) \in [d]^{K(\e) + 1}$ that determines the directions we move in at each time step to construct a tree, and once $x_1$ is placed in the space (it is allowed to fall anywhere), each of the later vertices is required to fall in its own specific cube of side length $r = {1}/({4K(\e) + 4})$, which has volume $\left(4K(\e) + 4\right)^{-d} $. Therefore, once the string is chosen, the probability of such a tree configuration is $ \left(4K(\e) + 4\right)^{-d (K(\e) + 1)} $.

    Note that there are $d^{K(\e) + 1}$ different strings that can be chosen. Therefore, when \eqref{initial-higher-dimension-lower-bound-confidence set-size} holds, the family of tree configurations occurs with probability at least $\e$.
\end{proof}

\begin{claim}
    The posterior probability of the root vertex $x_1$ is lower than that of $x_t$ for $t \in \{2, 3, \dots, K(\e) + 1\}$.
\end{claim}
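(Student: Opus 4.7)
The plan is to mirror the combinatorial counting argument from the one-dimensional lower bound (claim (iii) in the proof of Theorem~\ref{thm:1d-upper-and-lower}), reducing the claim to counting feasible permutations of arrival times for each candidate root. The key geometric input is a distance monotonicity statement along the path, after which the analysis is purely combinatorial.

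First I would establish the following monotonicity: for any $T \in \mathcal{F}$ and any $1 \leq j < k < k' \leq K(\e)+2$, we have $d(x_j,x_k) < d(x_j,x_{k'})$ (and symmetrically for $j > k > k'$). Since $x_k - x_j = \sum_{s=j}^{k-1} w_s$ where each $w_s$ has strictly positive entries, and each coordinate of this sum is bounded by $(K(\e)+1)/(2K(\e)+2) < 1/2$, the torus distance coincides with the Euclidean distance. Adding the further vector $\sum_{s=k}^{k'-1} w_s$ with positive entries strictly increases every coordinate, and hence the $\ell_2$-norm, giving the monotonicity.

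Given monotonicity, any permutation $\sigma$ with $\sigma(i) = 1$ is feasible if and only if the arrival order, starting at $x_i$, grows the path outward: whenever $x_j$ arrives ($j \neq i$), at least one of $x_{j-1}, x_{j+1}$ has already arrived. Indeed, since $T^\circ$ is a path, upon arrival $x_j$ must attach to either $x_{j-1}$ or $x_{j+1}$; by monotonicity, the nearest previously-arrived vertex is precisely whichever of these two path-neighbors arrived first. Counting such outward-growing orderings yields exactly $1$ feasible permutation for $\sigma(1) = 1$ (the identity, growing strictly rightward), and $\binom{K(\e)+1}{i-1} \geq K(\e)+1 > 1$ for each $i \in \{2,\ldots,K(\e)+1\}$ (choose which of the $K(\e)+1$ extension steps go left versus right).

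Since all feasible permutations are equally likely given $T^\circ$ under the $d$-NN model, $\MLE_{T^\circ}(i)$ is proportional to the number of feasible $\sigma$ with $\sigma(i) = 1$, and the count above gives $\MLE_{T^\circ}(1) < \MLE_{T^\circ}(t)$ for every $t \in \{2,\ldots,K(\e)+1\}$. The only subtle step is the torus-vs-Euclidean comparison in the monotonicity argument; this is handled up front by the coordinate-sum bound that rules out wrap-around, after which everything reduces to the same linear-extension count used in the $d=1$ proof.
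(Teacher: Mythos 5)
Your proposal is correct and follows essentially the same approach as the paper's: characterize the feasible permutations $\sigma$ with $\sigma(i)=1$ as exactly the linear extensions of the path growing outward from $x_i$, count them (one for $i=1$, strictly more for $i\in\{2,\dots,K(\e)+1\}$), and use the fact that all feasible permutations are equally likely given $T^\circ$. Your write-up is actually slightly more careful than the paper's in one place: the paper cites its ``connections'' claim (which only shows that, under the \emph{identity} arrival order, $x_{t+1}$ attaches to $x_t$) to justify the feasibility characterization, whereas you correctly observe that one needs the stronger monotonicity $d(x_j,x_k)<d(x_j,x_{k'})$ for $j<k<k'$ (and symmetrically) to conclude that an outward-growing order really produces the path at every step and that any non-outward-growing order fails. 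You also make the count exact, $\binom{K(\e)+1}{i-1}$, rather than merely noting it exceeds $1$; this is a harmless strengthening. One sentence worth tightening: ``the nearest previously-arrived vertex is precisely whichever of these two path-neighbors arrived first'' — in an outward-growing order only one of $x_{j-1},x_{j+1}$ has arrived before $x_j$, so the clause is vacuously about the unique arrived neighbor; the interesting work is done by monotonicity when you compare $x_{j\pm1}$ to vertices farther along the arrived interval, which you do handle.
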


\begin{proof}
    This proof follows in the exact same manner as for the lower bounds over the one-dimensional torus and the thin strip. Let a feasible permutation of the arrival times be an assignment of arrival times to vertices in the unlabeled tree that is realizable by the random nearest neighbor process given the geometric positions of vertices and the edges between them.
    
    Note that, given the positions of vertices and the graph structure of a tree from the family considered, the only feasible permutation $\sigma$ such that $\sigma(1) = 1$ is the identity permutation where each $\sigma(i) = i$; else, feasibility is violated. For any other $i$, the feasible permutations such that $\sigma(i) = 1$ are the permutations satisfying $\sigma(i) < \sigma(i + 1) < \sigma(i + 2) < \dots < \sigma(K(\e) + 2)$ and $\sigma(i) < \sigma(i - 1) < \sigma(i - 2) < \dots < \sigma(1)$ (because, again, otherwise feasibility is violated). All feasible permutations are equally likely given the tree configuration, by the definition of the process. Therefore, for each tree in the family considered, there are at least $K(\e)$ other vertices that have a higher posterior probability than the root.
\end{proof}

This last claim implies that when $K(\e)$ satisfies \eqref{initial-higher-dimension-lower-bound-confidence set-size}, the optimal procedure which returns the $K(\e)$ vertices with the highest posterior probabilities fails to return the root.

\section{Conclusion and open questions} \label{sec:open-questions}

In this paper, we studied the problem of finding the root of a tree generated by the random nearest neighbor process over the $d$-dimensional unit torus $\T^d$. A root finding algorithm must return a confidence set that includes the root with probability $1 - \e$. In the setting where an algorithm is given access to the unlabeled tree and its embedding (which we call the \textit{embedded root finding} problem), when $d = 1$ we prove that the size of the confidence set is $\Theta\left( \frac{\log(1/\e)}{\log \log(1/\e)}\right)$. For fixed $d \geq 2$, the confidence set size is uniformly bounded in terms of $\e$ and $d$, and our upper bounds are sub-polynomial in $1/\e$. The difference in the bounds for the setting of $d = 1$ versus $d \geq 2$ leads to the following open question.
\medskip 

\no \textbf{Question 1.}
    For the $d$-NN model with $d \in \mathbb{N}$, does there exist a poly$(n, 1/\e)$-time root finding algorithm that returns a set of size $|H(\e, n)| \leq \log^d(1/\e)$ for all $n \in \mathbb{N}$?
\medskip

Key to our bounds is the notion of \textit{uncovered vertices}, whose induced subgraph has bounded degree and must contain the root. We believe that a strengthened understanding and analysis of the induced subgraph of uncovered vertices can be useful in obtaining the conjectured set size. We propose the following conjectures regarding uncovered vertices, which may or may not suffice for resolving the conjecture above.

\medskip
\noindent\textbf{Bounds on uncovered vertices.} Our algorithms in both the one-dimensional and higher-dimensional settings rely on uncovered vertices (see Definitions \ref{def:uncovered} and \ref{def:uncovered-general}). In the higher-dimensional setting, we use that the outdegree of any uncovered vertex in the uncovered subtree is at most $a^d$ for some constant $a > 0$. However, we believe there are more properties of uncovered vertices that can be explored and utilized. Particularly we pose the following questions.
\medskip 

\no \textbf{Question 2.}
For a tree $T_n$ and fixed dimension $d \in \N$, let $\mathcal{U}_n$ be the set of uncovered vertices (as in Definition \ref{def:uncovered-general}). Is $\E |\U_n| = O(\text{poly} \log n)$? Additionally, for random nearest neighbor trees, is $|\U_n| = O(\text{poly} \log n)$ with probability at least $1 - 1/n$?
\smallskip 

\no \textbf{Question 3.}
        More strongly, for fixed dimension $d \in \N$, consider any set $S = \{x_1, x_2, \dots, x_n\} \subset \mathbb{T}^d$. Let $\Sigma = \{\sigma\}$ be the set of all permutations $\sigma: [n] \to [n]$. For a permutation $\sigma$, let $T_n(S, \sigma)$ be the nearest neighbor tree corresponding to $\{x_{\sigma(i)}\}_{i \in [n]}$. Let $\mathcal{U}_n(T_n(S, \sigma))$ be the set of uncovered vertices in $T_n(S, \sigma)$. 
        Is it true that, for any set $S$, $\mathbb{E}_{\sigma}|\mathcal{U}_n(T_n(S, \sigma))| = O(\text{poly} \log n)$?
        
\medskip
In the new direction of network archaeology for randomly growing graph processes in geometric settings, there are many interesting open questions to be explored. We highlight several of these below.
\medskip 

\noindent\textbf{Root finding without geometric information.} The algorithms in our paper crucially utilize the geometry of the tree, particularly the lengths of edges and the notion of regions in the space that edges can ``cover'' (see Definitions~\ref{def:uncovered} and \ref{def:uncovered-general}). It is open to explore the question of root finding without geometric information; that is, \textit{graph root finding} as defined in Definition~\ref{defn:root-finding}. Algorithms would need to rely on combinatorial properties, e.g.\ centrality measures as studied in \cite{bubeck2017finding}, or other properties explored in \cite{lichev2024new}.

\smallskip
\noindent\textbf{Improving the confidence set size when the dimension is large.} The upper-bounds on the confidence set size match those for uniform attachment trees when the dimension $d$ is treated as a constant. However, as $d \to \infty$, the random nearest neighbor process behaves as a uniform attachment process because each new vertex is roughly the same distance away from each existing vertex. Removing the dependence on $d$ in high dimensions to achieve the uniform attachment bound as $d \to \infty$ remains open.

\smallskip
\noindent\textbf{Network archaeology beyond root finding.} One can study other properties of the history of the tree, such as the times of arrival of all vertices. One can also ask to return a confidence set including a vertex added late in the process.

\section{Acknowledgements}
AB was supported in part by Vannevar Bush Faculty Fellowship ONR-N00014-20-1-2826 and NSF GRFP 2141064. CM was supported in part by an NDSEG fellowship, and by NSF Award 2152413 and a Simons Investigator Award to Madhu Sudan. EM was supported in part by a Simons Investigator Award, Vannevar Bush Faculty Fellowship ONR-N00014-20-1-2826, NSF award CCF 1918421, and ARO MURI W911NF1910217. MS was supported in part by a Simons Investigator Award and NSF Award CCF 2152413. 

We thank Byron Chin, Travis Dillon, Dieter Mitsche, and Mehtaab Sawhney for helpful conversations.

\bibliographystyle{alpha}
\bibliography{bibliography}

\end{document}